 \newtheorem{thm}{Theorem}[section]
 \newtheorem{cor}[thm]{Corollary}
 \newtheorem{lem}[thm]{Lemma}
 \newtheorem{defn}[thm]{Definition}
 \newtheorem{rem}[thm]{Remark}
 \numberwithin{equation}{section}
\begin{document}

%-------------------------------------------------------------------------
% editorial commands: to be inserted by the editorial office
%
%\firstpage{1} \volume{228} \Copyrightyear{2004} \DOI{003-0001}
%
%
%\seriesextra{Just an add-on}
%\seriesextraline{This is the Concrete Title of this Book\br H.E. R and S.T.C. W, Eds.}
%
% for journals:
%
%\firstpage{1}
%\issuenumber{1}
%\Volumeandyear{1 (2004)}
%\Copyrightyear{2004}
%\DOI{003-xxxx-y}
%\Signet
%\commby{inhouse}
%\submitted{March 14, 2003}
%\received{March 16, 2000}
%\revised{June 1, 2000}
%\accepted{July 22, 2000}
%
%
%
%---------------------------------------------------------------------------
%Insert here the title, affiliations and abstract:
%

%----------Author 1
\author[Youssef Aserrar  and  Elhoucien Elqorachi]{Youssef Aserrar  and  Elhoucien Elqorachi}

\address{% 
	Ibn Zohr University, Faculty of sciences, 
Department of mathematics,\\
Agadir,
Morocco}

\email{youssefaserrar05@gmail.com, elqorachi@hotmail.com }

\subjclass{39B52, 39B32}

\keywords{Semigroup, automorphism, measure, Van Vleck's functional equation, Wilson's functional equation.}

\date{January 1, 2020}
%----------additions
%\dedicatory{To my boss}
%%% ----------------------------------------------------------------------
\title[ Kannappan-Wilson and Van Vleck-Wilson functional equations]{Kannappan-Wilson and Van Vleck-Wilson functional equations on semigroups}
 
\begin{abstract}
Let $S$ be a semigroup, $Z(S)$ the center of $S$ and $\sigma:S\rightarrow S$ is an involutive automorphism. Our main results is that we describe the solutions of the Kannappan-Wilson functional equation 
\[\displaystyle \int_{S} f(xyt)d\mu(t) +\displaystyle \int_{S} f(\sigma(y)xt)d\mu(t)= 2f(x)g(y),\  x,y\in S,\]
and the Van Vleck-Wilson functional equation
\[\displaystyle \int_{S} f(xyt)d\mu(t) -\displaystyle \int_{S} f(\sigma(y)xt)d\mu(t)= 2f(x)g(y),\  x,y\in S,\]
where  $\mu$ is a measure that is a linear combination of Dirac measures $(\delta_{z_i})_{i\in I}$, such that  $z_i\in Z(S)$ for all $i\in I$. Interesting consequences of these results are presented.

\end{abstract}

%%% ----------------------------------------------------------------------
\maketitle

%%% ----------------------------------------------------------------------
%\tableofcontents
\section{Set up, Notation and terminology}
Throughout this paper $S$ denotes a semigroup, i.e.,  a set equipped with an associative binary operation, $ Z(S)$ denotes  the center of $S$, i.e.,
 $$Z(S):=\left\lbrace s\in S \mid xs=sx\ \text{for all}\ x\in S \right\rbrace .$$
\begin{defn}
Let $f:S\rightarrow\mathbb{C}$.\\
$f$ is  multiplicative, if  $f(xy) = f(x)f(y)$ for all $x, y \in S$.\\
$f$ is additive, if $f(xy) = f(x)+f(y)$ for all $x, y \in S$.\\
$f$ is central, if $f(xy) = f(yx)$ for all $x, y\in S$.\\
$f$ is abelian, if $f$ is central and $f(xyz)=f(xzy)$ for all $x,y,z\in S$.
\end{defn}
 The map $\sigma: S \rightarrow S$ denotes an involutive automorphism of $S$. That is $\sigma(xy)=\sigma(x)\sigma(y)$ and $\sigma(\sigma(x))=x$ for all $x,y\in S$. For any function $f :S \rightarrow \mathbb{C}$ we define the function $f^*:=f\circ \sigma$, and the functions $f^{e}:=\frac{f+f^{*}}{2}$, $f^{\circ}:=\frac{f-f^{*}}{2}$. The function $f$ is said to be even if $f=f^*$ and $f$ is odd if $f=-f^*$. Throughout the paper, we use without explicit mentioning the fact that $\sigma(s)\in Z(S)$ for any $s\in Z(S)$. \par 
If $\chi :S\rightarrow\mathbb{C}$ is a multiplicative function, we  denote by $\phi:S\rightarrow\mathbb{C}$ a non-zero solution of the special sine addition law 
\begin{equation}
\phi (xy)=\phi (x)\chi(y)+\phi(y)\chi(x),\quad x,y\in S.
\label{spsine}
\end{equation}
\section{Introduction}
The classical work of Wilson \cite{W1} was the first contribution to the theory of Wilson's functional equation 
\[f(x+y)+f(x-y)=2f(x)g(y),\quad x,y\in \mathbb{R},\]
which is a generalization of the cosine functional equation
\begin{equation}
g(x+y)+g(x-y)=2g(x)g(y),\quad x,y\in \mathbb{R}.
\label{cos}
\end{equation} 
Aczel's monograph \cite[Section 3.2.1 and Section 3.2.2]{Acz1} discusses the real valued, continuous solutions of Wilson’s equation and contains references to earlier works. These functional equations have been extended to abelian groups, and they have been solved on that setting. An extension of Wilson's functional equation above is treated by Stetk\ae r in \cite[Corollary 5]{ST7} who derived the solution formulas for the functional equation 
\[f(x+y)-f(x+\sigma(y))=2g(x)h(y),\quad x,y\in G,\]
on an abelian group $G$, where $\sigma:G\rightarrow G$ is an involution. The d'Alembert's functional equation 
\[f(xy)+f(x\sigma(y))=2f(x)f(y),\quad x,y\in S,\]
where $\sigma :S\rightarrow S$ is an involutive anti-automorphism (i.e, $\sigma(xy)=\sigma(y)\sigma(x)$ and $\sigma(\sigma(x))=x$ for all $x,y\in S$) has been solved on groups by Davison \cite{Dav} and others. See for example \cite[Chapter 9]{ST1}. It was also studied on semigroups by Stetk\ae r \cite{ST6}.\par 
  In \cite{ST5}, Stetk\ae r solved the variant of d'Alembert's functional equation 
\[f(xy)+f(\sigma(y)x)=2f(x)f(y),\quad x,y\in S,\]
on a semigroup $S$, where $\sigma$ is an involutive automorphism. The solutions are abelian and are of the form $f=\dfrac{\chi+\chi^*}{2}$, where $\chi:S\rightarrow\mathbb{C}$ is a multiplicative function. Elqorachi and Redouani \cite{EA} obtained the solutions of the variant of Wilson's functional equation
\begin{equation}
f(xy)+\tau(y)f(\sigma(y)x)=2f(x)g(y),\quad x,y\in S,
\label{wil}
\end{equation}
on groups, where $\sigma :S\rightarrow S$ is an involutive automorphism, and $\tau :S\rightarrow\mathbb{C}$ is a multiplicative function such that $\tau (x\sigma(x))=1$ for all $x\in S$. It was also solved on groups with $\tau=1$ by Fadli et al. \cite{F1}. The results were extended to semigroups generated by their squares by Ajebbar and Elqorachi \cite{Ajb}. Recently, Aserrar, Chahbi and Elqorachi \cite{Ase1} obtained the solutions on semigroups. Theorem \ref{thm1} reproduces their main points in terms of multiplicative functions, and solutions $\phi:S\rightarrow\mathbb{C}$ of Eq. \eqref{spsine} with $\tau=1$.
\begin{thm}
The solutions $f,g:S\rightarrow\mathbb{C}$ of Eq. \eqref{wil} with $g\neq 0$ and $\tau=1$, namely 
\[f(xy)+f(\sigma(y)x)=2f(x)g(y),\quad x,y\in S,\]
 are the following pairs
\begin{enumerate}
\item[(1)] $f=0$ and $g\neq 0$ arbitrary.
\item[(2)] $f=\alpha \chi+\beta \chi^*$ and $g=\dfrac{\chi+\chi^*}{2}$, where $\alpha,\beta\in\mathbb{C}$ are constants such that $(\alpha ,\beta)\neq (0,0)$ and $\chi:S\rightarrow\mathbb{C}$ is a non-zero multiplicative function.
\item[(3)] $f=\gamma_1\chi+\phi$ and $g=\chi$, where $\gamma_1\in \mathbb{C}$ is a constant, $\chi:S\rightarrow\mathbb{C}$ is a non-zero multiplicative function, $\phi$ is a non-zero solution of the sine addition law \eqref{spsine} such that $\chi^*=\chi$ and $\phi^*=-\phi$.
\label{thm1}
\end{enumerate}
\end{thm}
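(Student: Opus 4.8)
The plan is to follow the standard route for Wilson-type equations: strip off the trivial solutions, force the multiplier $g$ to satisfy the associated variant d'Alembert equation, appeal to the known classification of its solutions, and then read off $f$. If $f=0$, then \eqref{wil} with $\tau=1$ holds for every $g$, which is family~(1); so assume henceforth $f\neq 0$ and fix $a\in S$ with $f(a)\neq 0$. The technical core is to prove that $g$ satisfies
\[g(xy)+g(\sigma(y)x)=2g(x)g(y),\qquad x,y\in S .\]
I would obtain this by expanding, straight from the equation, $2f(a)g(xy)=f(axy)+f(\sigma(x)\sigma(y)a)$ and $2f(a)g(\sigma(y)x)=f(a\sigma(y)x)+f(y\sigma(x)a)$ — here one repeatedly uses that $\sigma$ is an \emph{automorphism}, so that $\sigma(xy)=\sigma(x)\sigma(y)$ and $\sigma(\sigma(y)x)=y\sigma(x)$ — then expanding $4f(a)g(x)g(y)$ by applying the equation twice in suitable groupings of the product $axy$, and comparing the two expansions; the equation also forces enough commutativity among the values of $f$ for this comparison to collapse, after dividing by $f(a)$, to the displayed equation for $g$. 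By Stetk\ae r's theorem \cite{ST5} and $g\neq 0$ it then follows that $g=\dfrac{\chi+\chi^{*}}{2}$ for some non-zero multiplicative $\chi\colon S\to\mathbb{C}$, that $g$ is abelian, and in particular $g^{*}=g$.

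With $g$ understood, I would recover $f$ by elementary manipulations. Applying the equation to the pair $(\sigma(y),x)$ gives $f(\sigma(y)x)=2f^{*}(y)g(x)-f^{*}(xy)$, and substituting this back in yields $f^{\circ}(xy)=f(x)g(y)-f^{*}(y)g(x)$ for all $x,y\in S$. Replacing $(x,y)$ by $(\sigma(x),\sigma(y))$ in this and using $g^{*}=g$ gives $f^{\circ}(xy)=f(y)g(x)-f^{*}(x)g(y)$; subtracting the two forms leaves $f^{e}(x)g(y)=f^{e}(y)g(x)$, hence $f^{e}=\gamma g$ for a constant $\gamma$ (as $g\neq 0$). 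Now write $f=\gamma g+f^{\circ}$ and $f^{*}=\gamma g-f^{\circ}$; substituting these into $f^{\circ}(xy)=f(x)g(y)-f^{*}(y)g(x)$ collapses that relation to
\[f^{\circ}(xy)=f^{\circ}(x)g(y)+f^{\circ}(y)g(x),\qquad x,y\in S ,\]
so $\phi:=f^{\circ}$ is central, satisfies $\phi^{*}=-\phi$, and solves the sine addition law with cosine $g$.

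Finally I would split into two cases. If $\chi^{*}=\chi$, then $g=\chi$ is multiplicative, so $\phi$ is a solution of \eqref{spsine} with $\chi^{*}=\chi$ and $\phi^{*}=-\phi$, and $f=\gamma\chi+\phi$: this is family~(3) when $\phi\neq 0$ and the special case $f=\gamma\chi$ of family~(2) when $\phi=0$. If $\chi^{*}\neq\chi$, then $\chi$ and $\chi^{*}$ are linearly independent multiplicative functions, and invoking the classification of solutions of the sine addition law — together with $\phi^{*}=-\phi$, and with the usual semigroup care about solutions supported on $\{x:\chi(x)=0\}$ — forces $\phi=\tfrac{\lambda}{2}(\chi-\chi^{*})$ for a constant $\lambda$; then $f=\gamma g+\phi=\alpha\chi+\beta\chi^{*}$ with $\alpha=\tfrac{\gamma+\lambda}{2}$, $\beta=\tfrac{\gamma-\lambda}{2}$, which is family~(2). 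A direct substitution confirms conversely that all pairs in (1)--(3) solve the equation. The step I expect to be the main obstacle is the d'Alembert equation for $g$: an intricate bookkeeping with non-commuting products in which the automorphism property of $\sigma$ is used over and over; a secondary difficulty is excluding the degenerate sine-addition-law solutions when $\chi^{*}\neq\chi$.
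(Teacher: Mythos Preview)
The paper does not prove Theorem~\ref{thm1}: it is imported wholesale from \cite{Ase1} (``Aserrar, Chahbi and Elqorachi \cite{Ase1} obtained the solutions on semigroups. Theorem~\ref{thm1} reproduces their main points\ldots'') and then used as a black box in the proofs of Theorems~\ref{thm} and~\ref{thmVan2}. So there is no in-paper argument to compare your proposal against.

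On the merits of your proposal itself: the second half is correct and clean. Once $g$ satisfies the variant d'Alembert equation, your derivation of $f^{\circ}(xy)=f(x)g(y)-f^{*}(y)g(x)$, the conclusion $f^{e}=\gamma g$, the collapse to the sine addition law for $\phi=f^{\circ}$, and the case split $\chi^{*}=\chi$ versus $\chi^{*}\neq\chi^{*}$ all go through exactly as you describe and land in families~(2) and~(3). (The ``semigroup care about solutions supported on $\{\chi=0\}$'' is actually a non-issue in the $\chi\neq\chi^{*}$ case: the exotic sine-addition solutions only appear when $g$ is a single exponential.)

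The real gap is exactly where you flag it, and it is larger than your sketch suggests. With your expansions one has
\[
2f(a)\bigl[g(xy)+g(\sigma(y)x)\bigr]=f(axy)+f(\sigma(x)\sigma(y)a)+f(a\sigma(y)x)+f(y\sigma(x)a),
\]
while $4f(a)g(x)g(y)=f(axy)+f(\sigma(y)ax)+f(\sigma(x)ay)+f(\sigma(y)\sigma(x)a)$; the difference you must kill is
\[
f(\sigma(x)\sigma(y)a)+f(a\sigma(y)x)+f(y\sigma(x)a)-f(\sigma(y)ax)-f(\sigma(x)ay)-f(\sigma(y)\sigma(x)a),
\]
and on a general semigroup (no identity, no squares-generation hypothesis) this does \emph{not} vanish without first proving that $f$ is central, or equivalently abelian. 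The sentence ``the equation also forces enough commutativity among the values of $f$'' is precisely the content of \cite{Ase1}; it is where the work beyond the group case \cite{EA,F1} and the squares-generated case \cite{Ajb} lives. As written, your d'Alembert step is an outline of what one hopes is true, not a proof; you would need either to import the abelianness lemma from \cite{Ase1} or to reproduce its argument before the comparison closes.
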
  
Kannappan \cite{Kan1} generalized the cosine functional equation \eqref{cos} to the functional equation
\[f(x+y+z_0)+f(x-y+z_0)=2f(x)f(y),\quad x,y\in \mathbb{R},\]
where $z_0\neq 0$ is a real constant. He showed that any solution $f:\mathbb{R}\rightarrow\mathbb{C}$ of this equation has the form $f(x)=g(x-z_0)$, where $g:\mathbb{R}\rightarrow\mathbb{C}$ is a periodic solution of \eqref{cos} with period $2z_0$.\par 
 In 2017, Stetk\ae r \cite{ST3} generalized the result of Kannappan by solving the functional equation
\[f(xyz_0)+f(xy^*z_0)=2f(x)f(y),\quad x,y\in S,\]
on a semigroup $S$ such that $x\mapsto x^*$ is an involutive anti-automorphism and $z_0\in Z(S)$ fixed. An other similar looking functional equation is Van Vleck's functional equation
\[f(x-y+z_0)-f(x+y+z_0)=2f(x)f(y),\quad x,y\in \mathbb{R},\]
which was studied by Van Vleck \cite{Van} with $z_0>0$ fixed. Stetk\ae r \cite{ST2} extended Van Vleck's result to general semigroups by solving the functional equation 
\[f(xy^*z_0)-f(xyz_0)=2f(x)f(y),\quad x,y\in S,\]
where $z_0\in Z(S)$ is fixed, $x\mapsto x^*$ is an involutive anti-automorphism. He proved that the solutions are all abelian even in the setting of semigroups. Elqorachi \cite{EL} described the solutions of the functional equation 
\[\displaystyle \int_{S} f(xyt)d\mu(t) -\displaystyle \int_{S} f(x\sigma(y)t)d\mu(t)= 2f(x)f(y),\  x,y\in S,\]
where $S$ is a semigroup, $\sigma$ is an involutive automorphism and $\mu$ is a measure that is
a linear combination of Dirac measures $(\delta_{z_i})_{i\in I}$ with $z_i\in Z(S)$ for all $i\in I$. Other results about these functional equations can be found in \cite{BE} and \cite{Pirk}.\par 
 The first purpose of the present paper is to solve the Kannappan-Wilson functional equation 
\begin{equation}
\displaystyle \int_{S} f(xyt)d\mu(t) +\displaystyle \int_{S} f(\sigma(y)xt)d\mu(t)= 2f(x)g(y),\  x,y\in S,
\label{kw}
\end{equation}
where $S$ is a semigroup, $\mu$ is a measure that is
a linear combination of Dirac measures $(\delta_{z_i})_{i\in I}$ with $z_i\in Z(S)$ for all $i\in I$ and $\sigma$ is an involutive automorphism. Equation \eqref{kw} is a generalization of \eqref{wil}, so here we show that the two equations are related and consequently we find explicit formulas for the solutions expressed in terms of multiplicative functions, and solutions of the special sine addition law \eqref{spsine}. As a consequence, we solve the Kannappan-Wilson functional equation
\begin{equation}
f(xyz_0)+f(\sigma(y)xz_0)=2f(x)g(y),\quad x,y\in S,
\label{k}
\end{equation}
on semigroups, where $\sigma$ is an involutive automorphism and $z_0\in Z(S)$ fixed. We also give the solutions of Jensen's functional equation 
\begin{equation}
f(xyz_0)+f(\sigma(y)xz_0)=2f(x),\quad x,y\in S,
\label{jen}
\end{equation}
and by taking $\sigma=id$, we derive the solutions of the functional equation 
\begin{equation}
f(xyz_0)+f(yxz_0)=2f(x)g(y),\quad x,y\in S,
\label{id}
\end{equation}
which is a generalization of the symmetrized multiplicative Cauchy equation
\[f(xy)+f(yx)=2f(x)f(y),\quad x,y\in S,\]
which was  solved by Stetk\ae r in \cite{ST4}.\par 
The second aim of the paper is to solve the Van Velck-Wilson functional equation 
\begin{equation}
\displaystyle \int_{S} f(xyt)d\mu(t) -\displaystyle \int_{S} f(\sigma(y)xt)d\mu(t)= 2f(x)g(y),\  x,y\in S,
\label{Van2}
\end{equation}
and the Van Vleck functional equation 
\begin{equation}
f(xyz_0)-f(\sigma(y)xz_0)=2f(x)g(y),\quad x,y\in S,
\label{Van3}
\end{equation}
where $z_0\in Z(S)$. We also give some results that are not in the literature about the functional equation
\begin{equation}
f(xy)-f(\sigma(y)x)=2f(x)g(y),\quad x,y\in S,
\label{Van1}
\end{equation} 
on a semigroup $S$, where $\sigma:S\rightarrow S$ is an involutive automorphism. We show that the functional equations \eqref{Van2} and \eqref{Van3} are related to \eqref{wil}, and we find explicit formulas for the solutions in terms of multiplicative functions.\par 
Until now, the functional equation \eqref{wil} is the only one among all Wilson's type functional equations which is solved (with $g\neq 0$) on any semigroup, and it is not difficult to see that the treatment of Eq. \eqref{kw} on monoids or groups compared with its treatment on semigroups is much easier thanks to the existence of a neutral element which allows the equation to be related to Eq. \eqref{wil} easily. In the following remark, we give a proof of this fact.
\begin{rem}
Let $f,g:S\rightarrow\mathbb{C}$ be a solution of Eq. \eqref{kw}. Suppose $S$ is a monoid and $e$ its neutral element. If we put $y=e$ in Eq. \eqref{kw} and taking into account that $\sigma(e)=e$, we obtain 
\[\displaystyle \int_{S} f(xt)d\mu(t) +\displaystyle \int_{S} f(xt)d\mu(t)= 2f(x)g(e),\]
for all $x\in S$. That is $\displaystyle \int_{S} f(xt)d\mu(t)= f(x)g(e)$. Thus, Eq. \eqref{kw} becomes 
\[g(e)f(xy)+g(e)f(\sigma(y)x)=2f(x)g(y),\quad x,y\in S.\]
Therefore, if $g(e)=0$ then $f=0$ or $g=0$. Now, if $g(e)\neq 0$, the last equation above can be written as 
\[f(xy)+f(\sigma(y)x)=2f(x)\dfrac{g(y)}{g(e)},\quad x,y\in S,\]
which means that the pair $\left(f,\dfrac{g}{g(e)} \right) $ is a solution of Eq. \eqref{wil}.
\end{rem}
The outline of the paper is as follows. The next section consists of two subsections. In subsection 3.1, we solve the functional equation \eqref{kw}, and as a consequence we find the solutions of \eqref{k}, \eqref{jen} and \eqref{id} in subsection 3.2. Subsection 4.1 contain the solutions of the functional equation \eqref{Van1}, and the solutions of \eqref{Van2} and \eqref{Van3} are given in subsection 4.2.
\section{Kannappan-Wilson functional equations}
\subsection{Solutions of Eq. \eqref{kw}}
The following lemma contain useful results about the solutions of the functional equation \eqref{kw}.
\begin{lem}
Let $f,g:S\rightarrow\mathbb{C}$ be a solution of Eq. \eqref{kw} such that $f\neq 0$ and $g\neq 0$. The following statements hold
\begin{enumerate}
\item[(1)] For all $x,y\in S$
\begin{align}
\begin{split}
\displaystyle \int_{S} f(xt)d\mu(t)\left[ \displaystyle \int_{S} g(ys)d\mu(s)+ \displaystyle \int_{S} g(y\sigma(s))d\mu(s) \right]\\=2f(x)g(y)\displaystyle \int_{S} g(s)d\mu(s).
\end{split}
\label{m1}
\end{align}
\item[(2)] Define $h(x):=\displaystyle \int_{S} f(xt)d\mu(t)$ for all $x\in S$. Then $h\neq 0$.
\item[(3)] Suppose $\displaystyle \int_{S} g(s)d\mu(s)\neq 0$, and define the functions 
$$F(x):=\dfrac{h(x)}{\displaystyle \int_{S} g(s)d\mu(s)}\quad\text{and}\quad G(x):=\dfrac{\displaystyle \int_{S} g(xt)d\mu(t)+ \displaystyle \int_{S} g(x\sigma(t))d\mu(t)}{2\displaystyle \int_{S} g(s)d\mu(s)}.$$
Then $G\neq 0$, and
\begin{equation}
F(xy)+F(\sigma(y)x)=2F(x)G(y),\quad x,y\in S.
\label{i3}
\end{equation}
\item[(4)] If $\displaystyle \int_{S} g(s)d\mu(s)=0$, then $h(x)=\lambda_1 f(x)$ for all $x\in S$, where $\lambda_1 \in \mathbb{C}\backslash \left\lbrace  0\right\rbrace $ is a constant.
\end{enumerate}
\label{L1}
\end{lem}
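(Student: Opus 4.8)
\emph{Reduction, and part (2).} By the very definition of $h$ in the statement, $\int_{S}f(xyt)\,d\mu(t)=h(xy)$ and $\int_{S}f(\sigma(y)xt)\,d\mu(t)=h(\sigma(y)x)$, so Eq.~\eqref{kw} is nothing but
\[
h(xy)+h(\sigma(y)x)=2f(x)g(y),\qquad x,y\in S,
\]
which I shall call $(\star)$. Part (2) is then immediate: if $h\equiv 0$, then $(\star)$ gives $f(x)g(y)=0$ for all $x,y$, contradicting $f\neq 0\neq g$. Taking $y=z\in Z(S)$ in $(\star)$ and using $\sigma(z)\in Z(S)$ (so that $\sigma(z)x=x\sigma(z)$) yields the specialization $h(wz)+h(w\sigma(z))=2f(w)g(z)$ for all $w\in S$, $z\in Z(S)$; replacing $z$ by $\sigma(z)$ in it and comparing left-hand sides forces $g=g^{*}$ on $Z(S)$, hence $c:=\int_{S}g(s)\,d\mu(s)=\int_{S}g(\sigma(s))\,d\mu(s)$.

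\emph{Part (1).} Write $\Phi(y):=\int_{S}g(ys)\,d\mu(s)+\int_{S}g(y\sigma(s))\,d\mu(s)$, so that \eqref{m1} is the claim $h(x)\Phi(y)=2cf(x)g(y)$. Fix $s,t\in Z(S)$. Replacing $(x,y)$ by $(xt,ys)$ in Eq.~\eqref{kw} and rearranging the (central) factors $t,s,\sigma(s)$, using $\sigma(ys)=\sigma(y)\sigma(s)$, gives $h\bigl((xyt)s\bigr)+h\bigl((\sigma(y)xt)\sigma(s)\bigr)=2f(xt)g(ys)$; replacing instead $(x,y)$ by $(xt,y\sigma(s))$ gives $h\bigl((xyt)\sigma(s)\bigr)+h\bigl((\sigma(y)xt)s\bigr)=2f(xt)g(y\sigma(s))$. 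Add these two and integrate over $s$ against $\mu$: the left-hand side breaks into $\int_{S}\bigl(h(wz)+h(w\sigma(z))\bigr)\,d\mu(z)$ with $w=xyt$ and with $w=\sigma(y)xt$, each of which equals $2cf(w)$ by the specialization of $(\star)$ above. Hence
\[
c\bigl(f(xyt)+f(\sigma(y)xt)\bigr)=f(xt)\,\Phi(y),\qquad t\in Z(S),\ x,y\in S.
\]
Now integrate this over $t$ against $\mu$: the left-hand side becomes $c\bigl(\int_{S}f(xyt)\,d\mu(t)+\int_{S}f(\sigma(y)xt)\,d\mu(t)\bigr)=2cf(x)g(y)$ by Eq.~\eqref{kw}, while the right-hand side becomes $h(x)\Phi(y)$. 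This is exactly \eqref{m1}.

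\emph{Parts (3)–(4), and the main obstacle.} If $c\neq 0$, set $F:=h/c$ and $G:=\Phi/(2c)$; dividing \eqref{m1} by $2c^{2}$ gives $cF(x)G(y)=f(x)g(y)$, dividing $(\star)$ by $c$ gives $F(xy)+F(\sigma(y)x)=2f(x)g(y)/c=2F(x)G(y)$, i.e.\ Eq.~\eqref{i3}, and $G\equiv 0$ would force $f(x)g(y)\equiv 0$; this disposes of (3). If $c=0$, then \eqref{m1} reads $h(x)\Phi(y)=0$, and since $h\neq 0$ by (2) we get $\Phi\equiv 0$. To finish (4) one must still produce the scalar $\lambda_{1}$: the plan is to combine $\Phi\equiv 0$ with the identity $f(x)\Phi(y)=g(y)\bigl(h(x)+\int_{S}f(x\sigma(s))\,d\mu(s)\bigr)$ — obtained by also feeding $x\mapsto xs$ and $x\mapsto x\sigma(s)$ into $(\star)$ and matching with its $y$‑shifted versions — to get $\int_{S}f(x\sigma(s))\,d\mu(s)=-h(x)$; then to play $(\star)$ against its $\sigma$‑conjugate and its $y\mapsto\sigma(y)$ form to force parity relations between $f$, $g$ and $\sigma$; and finally to run one more substitution in $(\star)$, together with $\int_{S}h(xt)\,d\mu(t)+\int_{S}h(x\sigma(t))\,d\mu(t)=2cf(x)=0$, to conclude $h=\lambda_{1}f$ with $\lambda_{1}\neq 0$ by (2). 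This last step — extracting proportionality of $h$ and $f$ in the degenerate case $c=0$, where the clean reduction to the Wilson equation \eqref{i3} is unavailable — is the one I expect to be genuinely delicate, likely needing a case split according to the behaviour of $f$ and $g$ under $\sigma$; everything else is substitution and integration against $\mu$.
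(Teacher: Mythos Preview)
Your treatment of parts (1)--(3) is correct; for (1) you take a slightly different route from the paper (which applies \eqref{kw} to the four pairs $(xyt,s)$, $(\sigma(y)xt,s)$, $(xt,ys)$, $(xt,y\sigma(s))$ and compares), whereas you first isolate the specialization $h(wz)+h(w\sigma(z))=2f(w)g(z)$ for $z\in Z(S)$ and integrate it to $\int_S\bigl(h(wz)+h(w\sigma(z))\bigr)\,d\mu(z)=2cf(w)$ as a reusable tool --- this is a clean simplification and works fine.

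Part (4), however, is not proved: you obtain $\Phi\equiv 0$ and $\int_S f(x\sigma(s))\,d\mu(s)=-h(x)$, but from there you only sketch a plan involving ``parity relations'' and a ``case split'', and you yourself flag this step as the delicate one. In fact no parity discussion or case split is needed. The paper's key idea is to derive the mixed identity
\[
\int_S\!\!\int_S f(xsyt)\,d\mu(t)\,d\mu(s)\;=\;f(x)\!\int_S g(ys)\,d\mu(s)\;+\;g(y)\,h(x),
\]
obtained by adding the $s$-integrated forms of $(\star)$ with $(x,y)\mapsto(x,ys)$ (after using $\int_S h(w s)\,d\mu(s)=-\int_S h(w\sigma(s))\,d\mu(s)$, which is your relation rewritten) and with $(x,y)\mapsto(xs,y)$. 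Now evaluate this identity at $(xk,y)$ and at $(x,yk)$ and integrate each over $k$: the left-hand sides coincide, and comparing right-hand sides yields
\[
g(y)\!\int_S\!\!\int_S f(xks)\,d\mu(s)\,d\mu(k)\;=\;f(x)\!\int_S\!\!\int_S g(yks)\,d\mu(s)\,d\mu(k),
\]
so, since $g\neq 0$, there is $\alpha\in\mathbb{C}$ with $\int_S\!\int_S f(xks)\,d\mu\,d\mu=\alpha f(x)$. Substituting back gives $\alpha f(xy)=f(x)\int_S g(ys)\,d\mu(s)+g(y)h(x)$; a short argument (replace $x$ by $xk$, integrate, and use $h\neq 0$) rules out $\alpha=0$, and then putting $y=k$ and integrating yields $h(x)=\lambda_1 f(x)$ with $\lambda_1=\alpha^{-1}\int_S\!\int_S g(ks)\,d\mu\,d\mu\neq 0$. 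This replaces your vague final step with a clean separation-of-variables argument.
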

\begin{proof}
(1) Let $x,y,t,s\in S$ be arbitrary. Applying Eq. \eqref{kw} to the pair $(xyt,s)$ and then to $(\sigma(y)xt,s)$, we get respectively 
\begin{equation}
g(s)f(xyt)= \dfrac{1}{2}\left[\displaystyle \int_{S} f(xytsk)d\mu(k)+\displaystyle \int_{S} f(\sigma(s)xytk)d\mu(k) \right],
\label{i1}
\end{equation} 
and 
\begin{equation}
g(s)f(\sigma(y)xt)=\dfrac{1}{2}\left[\displaystyle \int_{S} f(\sigma(y)xtsk)d\mu(k)+\displaystyle \int_{S} f(\sigma(s)\sigma(y)xtk)d\mu(k) \right].
\label{i2}
\end{equation}
On the other hand, if we apply Eq. \eqref{kw} to the pair $(xt,ys)$ and then to $(xt,y\sigma(s))$, and integrate the result with respect to $t$ and $s$, we obtain respectively 
\begin{align}
\begin{split}
\dfrac{1}{2}\left[\displaystyle \int_{S}\int_{S}\left( \int_{S} f(xytsk)d\mu(k)+\displaystyle \int_{S} f(\sigma(s)\sigma(y)xtk)d\mu(k)\right) d\mu(t)d\mu(s)\right]\\=\displaystyle \int_{S} f(xt)d\mu(t)\displaystyle \int_{S} g(ys)d\mu(s),
\end{split}
\label{m2}
\end{align}
and
\begin{align}
\begin{split}
\dfrac{1}{2}\left[\displaystyle \int_{S}\int_{S}\left( \int_{S} f(\sigma(s)xytk)d\mu(k)+\displaystyle \int_{S} f(\sigma(y)xtsk)d\mu(k)\right) d\mu(t)d\mu(s) \right]\\=\displaystyle \int_{S} f(xt)d\mu(t)\displaystyle \int_{S} g(y\sigma(s))d\mu(s).
\end{split}
\label{m3}
\end{align} 
Thus, by integrating Eq. \eqref{i1} and Eq. \eqref{i2} with respect to $t$ and $s$, then adding the two equations obtained and taking into account  Eq. \eqref{m2} and  Eq. \eqref{m3}, we obtain Eq. \eqref{m1}.\\
(2) Suppose $h=0$. Then Eq. \eqref{kw} yields $2f(x)g(y)=0$ for all $x,y\in S$, so $f=0$ or $g=0$. This is a contradiction since $f\neq 0$ and $g\neq 0$. Thus $h\neq 0$. This occurs in (2).\\
(3) Suppose $\displaystyle \int_{S} g(s)d\mu(s)\neq 0$. Equation  \eqref{i3} follows easily from (1) when we divide Eq. \eqref{m1} by $\left( \displaystyle \int_{S} g(s)d\mu(s)\right) ^2$. In addition, suppose $G=0$. It follows from Eq. \eqref{m1} that 
$$\displaystyle \int_{S} g(s)d\mu(s)\left[\displaystyle \int_{S} f(xyt)d\mu(t)+\displaystyle \int_{S} f(\sigma(y)xt)d\mu(t) \right]=0,$$ 
for all $x,y\in S$. That is $2 f(x)g(y)\displaystyle \int_{S} g(s)d\mu(s)=0$, which implies that $\displaystyle \int_{S} g(s)d\mu(s)=0$ since $f\neq 0$ and $g\neq 0$. This is a contradiction. So $G\neq 0$. This is part (3).\\
(4) Assume that $\displaystyle \int_{S} g(s)d\mu(s)=0$. Replacing $(x,y)$ by $(x,s)$ in the functional equation \eqref{kw} and integrating the result obtained with respect to $s$, we find that 
\[\displaystyle \int_{S}\int_{S} f(xst)d\mu(t)d\mu(s)+\displaystyle \int_{S}\int_{S} f(\sigma(s)xt)d\mu(t)d\mu(s)=0,\]
for all $x\in S$. This implies 
\begin{equation}
\displaystyle \int_{S}\int_{S} f(xst)d\mu(t)d\mu(s)=-\displaystyle \int_{S}\int_{S} f(\sigma(s)xt)d\mu(t)d\mu(s).
\label{m4}
\end{equation}
Now, if we replace $(x,y)$ by $(x,ys)$ in Eq. \eqref{kw} and integrate the result obtained with respect to $s$, we get 
\[\displaystyle \int_{S}\int_{S} f(xyst)d\mu(t)d\mu(s)+\displaystyle \int_{S}\int_{S} f(\sigma(ys)xt)d\mu(t)d\mu(s)=2f(x)\displaystyle \int_{S} g(ys)d\mu(s).\]
That is, in view of Eq. \eqref{m4}
\begin{equation}
\displaystyle \int_{S}\int_{S} f(xyst)d\mu(t)d\mu(s)-\displaystyle \int_{S}\int_{S} f(s\sigma(y)xt)d\mu(t)d\mu(s)=2f(x)\displaystyle \int_{S} g(ys)d\mu(s).
\label{m5}
\end{equation}
On the other hand, if we replace $(x,y)$ by $(xs,y)$ in Eq. \eqref{kw} and integrate the result obtained with respect to $s$, we obtain
\begin{equation}
\displaystyle \int_{S}\int_{S} f(xsyt)d\mu(t)d\mu(s)+\displaystyle \int_{S}\int_{S} f(\sigma(y)xst)d\mu(t)d\mu(s)=2g(y)\displaystyle \int_{S} f(xs)d\mu(s).
\label{m6}
\end{equation} 
Thus, by adding Eq. \eqref{m5} to Eq. \eqref{m6}, we get
\begin{equation}
\displaystyle \int_{S}\int_{S} f(xsyt)d\mu(t)d\mu(s)=f(x)\displaystyle \int_{S} g(ys)d\mu(s)+g(y)\displaystyle \int_{S} f(xs)d\mu(s).
\label{m7}
\end{equation}
Applying Eq. \eqref{m7} to $(xk,y)$ and then to $(x,yk)$ and integrating the result obtained with respect to $k$, we get respectively 
\begin{align*}
\begin{split}
\displaystyle \int_{S}\int_{S}\int_{S} f(xksyt)d\mu(t)d\mu(s)d\mu(k)=\displaystyle \int_{S} f(xk)d\mu(k)\displaystyle \int_{S} g(ys)d\mu(s)\\+g(y)\displaystyle \int_{S}\int_{S} f(xks)d\mu(s)d\mu(k)
\end{split}
\end{align*}
and
\begin{align*}
\begin{split}
\displaystyle \int_{S}\int_{S}\int_{S} f(xsykt)d\mu(t)d\mu(s)d\mu(k)=  f(x)\displaystyle \int_{S}\int_{S} g(yks)d\mu(s)d\mu(k)\\+\displaystyle \int_{S} g(yk) d\mu(k) \displaystyle \int_{S} f(xs)d\mu(s)
\end{split}
\end{align*}
Therefore, comparing the last two identities, we can see that 
\[f(x)\displaystyle \int_{S}\int_{S} g(yks)d\mu(s)d\mu(k)=g(y)\displaystyle \int_{S}\int_{S} f(xks)d\mu(s)d\mu(k).\]
This implies, since $g\neq 0$ that there exists a constant $\alpha \in \mathbb{C}$ such that 
$$\displaystyle \int_{S}\int_{S} f(xks)d\mu(s)d\mu(k)=\alpha f(x)\ \text{ for all}\  x\in S.$$
So, Eq. \eqref{m7} becomes 
\begin{equation}
\alpha f(xy)=f(x)\displaystyle \int_{S} g(ys)d\mu(s)+g(y)\displaystyle \int_{S} f(xs)d\mu(s).
\label{m8}
\end{equation}
If $\alpha =0$, then replacing $x$ by $xk$ in Eq. \eqref{m8} and integrating the result obtained with respect to $k$,  we find that $$\displaystyle \int_{S} f(xk)d\mu(k)\displaystyle \int_{S} g(ys)d\mu(s)=0,$$ 
for all $x,y\in S$. This implies, since $h\neq 0$ that $\displaystyle \int_{S} g(ys)d\mu(s)=0$ for all $y\in S$. Thus, Eq. \eqref{m8} becomes 
$$g(y)\displaystyle \int_{S} f(xs)d\mu(s)=0,$$
 for all $x,y\in S$, which is a contradiction since $h\neq 0$ and $g\neq 0$. Therefore $\alpha \neq 0$, so dividing Eq. \eqref{m8} by $\alpha$ then replacing $y$ by $k$ and integrating the result obtained with respect to $k$, we obtain 
 $$\displaystyle \int_{S} f(xk) d\mu(k)=\dfrac{1}{\alpha}f(x) \displaystyle \int_{S}\int_{S} g(ks)d\mu(s)d\mu(k),$$
  for all $x\in S$. This occurs in case (4) with $\lambda_1:=\dfrac{1}{\alpha}\displaystyle \int_{S}\int_{S} g(ks)d\mu(s)d\mu(k)$. In addition $h\neq 0$ implies $\lambda_1 \neq 0 $. This completes the proof of Lemma \ref{L1}.
\end{proof}
The first main result of the present paper is the following
\begin{thm}
The solutions $f,g:S\rightarrow\mathbb{C}$ of the functional equation \eqref{kw} with $g\neq 0$ can be listed as follows
\begin{enumerate}
\item[(1)] $f=0$ and $g\neq 0$ arbitrary.
\item[(2)] $f=\lambda_2 m$ and $g=\displaystyle \int_{S} m(s) d\mu(s)\dfrac{m+m^*}{2}$, where $m:S\rightarrow\mathbb{C}$ is a multiplicative function and $\lambda_2\in \mathbb{C}\backslash \lbrace 0\rbrace$ a constant such that $\displaystyle \int_{S} m(s) d\mu(s)\neq 0$.
\item[(3)] $f=\alpha_1\chi+\beta_1 \chi^*$ and $g=\displaystyle \int_{S} \chi(s)d\mu(s)\dfrac{\chi+\chi^*}{2}$, where $\chi:S\rightarrow\mathbb{C}$ is a multiplicative function and $\alpha_1,\beta_1\in \mathbb{C}$ are constants such that $\left( \alpha_1,\beta_1\right)\neq (0,0) $,  $\chi\neq \chi^*$,  $\displaystyle \int_{S} \chi(s)d\mu(s)=\displaystyle \int_{S} \chi^*(s)d\mu(s)$ and $\displaystyle \int_{S} \chi(s)d\mu(s)\neq 0$.
\item[(4)] $f=\lambda(\gamma_1\chi+\phi)$ and $g=\displaystyle \int_{S} \chi(s)d\mu(s)\chi$, where $\chi:S\rightarrow\mathbb{C}$ is a non-zero multiplicative function, $\phi$ is a non-zero solution of the sine addition law \eqref{spsine} and $\lambda\in \mathbb{C}\backslash \lbrace 0\rbrace ,\gamma_1\in \mathbb{C}$ are constants such that $\chi^*=\chi$, $\phi^*=-\phi$, $\displaystyle \int_{S} \chi(s)d\mu(s)\neq 0$ and $\displaystyle \int_{S} \phi(s)d\mu(s)=0$.
\end{enumerate}
Note that, of the exceptional case (1), $f$ and $g$ are abelian.
\label{thm}
\end{thm}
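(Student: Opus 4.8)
The plan is to relate \eqref{kw} to the already-solved variant of Wilson's equation \eqref{wil} and then quote Theorem \ref{thm1}. If $f=0$, then \eqref{kw} holds for every $g$, which is case (1); so from now on assume $f\neq 0$, and, together with the standing hypothesis $g\neq 0$, we are in the situation of Lemma \ref{L1}. Write $h(x)=\int_S f(xt)\,d\mu(t)$ and $\delta=\int_S g(s)\,d\mu(s)$, and split according to whether $\delta\neq 0$ or $\delta=0$.

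\emph{The case $\delta\neq 0$.} By Lemma \ref{L1}(3) the functions $F=h/\delta$ and $G$ defined there satisfy $F(xy)+F(\sigma(y)x)=2F(x)G(y)$ with $G\neq 0$, and $F\neq 0$ by Lemma \ref{L1}(2). Inserting the definitions of $F$ and $G$ into \eqref{m1} gives $f(x)g(y)=\delta\,F(x)G(y)$ for all $x,y\in S$, whence $f=cF$ and $g=c'G$ for nonzero constants $c,c'$ with $cc'=\delta$. Now Theorem \ref{thm1} describes $(F,G)$: either $F=\alpha\chi+\beta\chi^{*}$, $G=(\chi+\chi^{*})/2$ with $\chi$ a nonzero multiplicative function and $(\alpha,\beta)\neq(0,0)$, or $F=\gamma\chi+\phi$, $G=\chi$ with $\chi^{*}=\chi$, $\phi^{*}=-\phi$ and $\phi$ a nonzero solution of \eqref{spsine}. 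In each case I substitute $f=cF$, $g=c'G$ back into \eqref{kw}; using $z_i\in Z(S)$, the multiplicativity of $\chi$ and $\chi^{*}$, and \eqref{spsine}, both sides become explicit combinations of $\chi$, $\chi^{*}$ and $\phi$. Comparing coefficients, via linear independence of $\chi$ and $\chi^{*}$ when $\chi\neq\chi^{*}$, resp.\ of $\chi$ and $\phi$, evaluates $c$ and $c'$ and forces the conditions $\int_S\chi\,d\mu\neq 0$ and $\int_S\chi\,d\mu=\int_S\chi^{*}\,d\mu$ in the first subcase (with the degenerate possibilities $\chi=\chi^{*}$ or $\beta=0$ collapsing $f$ onto a single multiplicative function), and $\int_S\chi\,d\mu\neq 0$, $\int_S\phi\,d\mu=0$ in the second. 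This yields exactly the forms (2), (3) and (4).

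\emph{The case $\delta=0$.} Here Lemma \ref{L1}(4) gives $h=\lambda_1 f$ with $\lambda_1\neq 0$; substituting $h(xy)=\lambda_1 f(xy)$ and $h(\sigma(y)x)=\lambda_1 f(\sigma(y)x)$ into \eqref{kw} shows that the pair $(f,g/\lambda_1)$ solves \eqref{wil}, so Theorem \ref{thm1} again gives the shape of $f$ and $g$. This time, however, I have the two extra conditions $\int_S g\,d\mu=0$ and $\int_S f(xt)\,d\mu(t)=\lambda_1 f(x)$. Running both of them through each of the two possible shapes and comparing coefficients shows that they are incompatible in every case except $f=\lambda_2\chi$ with $\chi$ multiplicative, $\lambda_2\neq 0$, and $g=\int_S\chi\,d\mu\,(\chi+\chi^{*})/2$, $\int_S\chi\,d\mu\neq 0$ --- which is already contained in case (2). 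Finally, the converse, that each pair listed in (1)--(4) solves \eqref{kw}, is a direct verification using the multiplicativity of $\chi$ and $\chi^{*}$ and the addition law \eqref{spsine}; and in (2)--(4) the functions $f,g$ are abelian, since multiplicative functions and solutions of \eqref{spsine} are abelian.

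I expect the main obstacle to be the bookkeeping in the case $\delta=0$: one must exploit both side conditions simultaneously to rule out the ``generic'' Wilson solutions and isolate the single degenerate survivor already present in case (2). The case $\delta\neq 0$ is conceptually routine once \eqref{m1} is available, the only care needed being to keep the subcases $\chi=\chi^{*}$ and $\chi\neq\chi^{*}$ separate and to track which constants remain free.
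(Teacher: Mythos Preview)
Your proposal is correct and follows essentially the same approach as the paper: both arguments split on whether $\delta=\int_S g\,d\mu$ vanishes, invoke Lemma~\ref{L1} to reduce to the Wilson equation~\eqref{wil}, apply Theorem~\ref{thm1}, and then substitute the resulting forms back into~\eqref{kw} to fix the constants and side conditions. The only cosmetic difference is that the paper integrates the identity $f(x)g(y)=\delta F(x)G(y)$ against $\mu$ to obtain $f=\bigl(\int_S G\,d\mu\bigr)F$ explicitly, whereas you deduce $f=cF$, $g=c'G$ directly from the factorization; both routes lead to the same case analysis.
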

\begin{proof}
Let $f,g:S\rightarrow\mathbb{C}$ be a solution of \eqref{kw}. If $f=0$, we can see from Eq. \eqref{kw} that $g\neq 0$ is arbitrary, so we are in family (1). Henceforth we suppose that $f\neq 0$ and $g\neq 0$. We split the discussion into the cases: $\displaystyle \int_{S} g(s)d\mu(s)=0$ and $\displaystyle \int_{S} g(s)d\mu(s)\neq 0$.\\
\underline{Case 1:} $\displaystyle \int_{S} g(s)d\mu(s)=0$. According to Lemma \ref{L1} (4), there exists a constant $\lambda_1\in \mathbb{C}\backslash \lbrace 0\rbrace$ such that $\displaystyle \int_{S} f(xk) d\mu(k)=\lambda_1 f(x)$ for all $x\in S$. So, Eq. \eqref{kw} can be written as 
\[f(xy)+f(\sigma(y)x)=2f(x)\dfrac{g(y)}{\lambda_1},\quad x,y\in S.\]
Then, according to Theorem \ref{thm1} and taking into account that $f\neq 0$, we have two possibilities\\
(i) $f=\alpha \chi+\beta \chi^*$ and $\dfrac{g}{\lambda_1}=\dfrac{\chi+\chi^*}{2}$, where $\chi:S\rightarrow\mathbb{C}$ is a non-zero multiplicative function and $(\alpha ,\beta)\neq (0,0)$ are constants. Since $\displaystyle \int_{S} g(s) d\mu(s)=0$, we can see that $\displaystyle \int_{S} \chi(s) d\mu(s)=-\displaystyle \int_{S} \chi^*(s)d\mu(s)$. Thus, inserting these forms in Eq. \eqref{kw}, we get after some rearrangements that for all $x,y\in S$
\begin{align*}
\begin{split}
\left( \alpha \displaystyle \int_{S} \chi(s) d\mu(s)-\alpha \lambda_1\right) \chi(x)(\chi(y)+\chi^*(y))\\=\chi^*(x)\left( \beta \displaystyle \int_{S} \chi(s) d\mu(s)+\beta\lambda_1\right) (\chi(y)+\chi^*(y)).
\end{split}
\end{align*}
If $\chi=\chi^*$, then $g=\lambda_1 \chi$. So $\displaystyle \int_{S} \chi(s) d\mu(s)=0$ and the identity above reduces to $\alpha =-\beta$. This implies $f=0$, contradiction. So $\chi\neq \chi^*$. Then by the help of \cite[Theorem 3.18]{ST1}, we deduce that 
\begin{equation}
\left\{\begin{array}{l}
\alpha \displaystyle \int_{S} \chi(s) d\mu(s)-\alpha \lambda_1=0 \\
\beta \displaystyle \int_{S} \chi(s) d\mu(s)+\beta\lambda_1=0 
\end{array}\right.
\label{p1}
\end{equation}
Since $f\neq 0$, then at least one of $\alpha$ and $\beta$ is non-zero.\\
(a) Assume $\alpha\neq 0$ and $\beta=0$. Then Eq.\eqref{p1} implies $\lambda_1=\displaystyle \int_{S} \chi(s) d\mu(s)$. This occurs in case (2) with $\lambda_2:=\alpha$ and $m:=\chi$.\\
(b) Suppose $\alpha= 0$ and $\beta\neq 0$. It follows from Eq. \eqref{p1} that $\lambda_1 =-\displaystyle \int_{S} \chi(s) d\mu(s)$. So, we get family (2) with $\lambda_2:=\beta$ and $m:=\chi^*$.\\
(c) If $\alpha\neq 0$ and $\beta\neq 0$. Then Eq. \eqref{p1} implies $\lambda_1=\displaystyle \int_{S} \chi(s) d\mu(s)=-\displaystyle \int_{S} \chi(s) d\mu(s)$. Thus $\lambda_1=0$. This case does not occur.\\
(ii) $f=\gamma_1\chi+\phi$ and $\dfrac{g}{\lambda_1}=\chi$, where $\chi:S\rightarrow\mathbb{C}$ is a non-zero multiplicative function, $\phi$ is a non-zero solution of the sine addition law \eqref{spsine} and $\gamma_1\in \mathbb{C}$ is a constant such that $\chi^*=\chi$ and $\phi^*=-\phi$. In addition $\displaystyle \int_{S} g(s) d\mu(s)=0$ yields $\displaystyle \int_{S} \chi(s) d\mu(s)=0$. Thus, inserting the forms of $f$ and $g$ in Eq. \eqref{kw}, we get after some simplifications
\[\phi=\left( \displaystyle \int_{S} \phi(s) d\mu(s)-\lambda_1 \gamma_1\right) \chi.\]
This implies $\phi=0$, since $\chi^*=\chi$ and $\phi^*=-\phi$,  which is not possible because $\phi\neq 0$. This case does not occur.\\
\underline{Case 2:} $\displaystyle \int_{S} g(s) d\mu(s)\neq 0$. By Lemma \ref{L1} (3), we have
\begin{equation}
F(xy)+F(\sigma(y)x)=2F(x)G(y),\quad x,y\in S,
\label{p2}
\end{equation}
where
$$F(x):=\dfrac{h(x)}{\displaystyle \int_{S} g(s)d\mu(s)}\quad\text{and}\quad G(x):=\dfrac{\displaystyle \int_{S} g(xt)d\mu(t)+ \displaystyle \int_{S} g(x\sigma(t))d\mu(t)}{2\displaystyle \int_{S} g(s)d\mu(s)},$$
and $h(x)=\displaystyle \int_{S} f(xt)d\mu(t)$ for all $x\in S$. In addition, according to Lemma \ref{L1} (2) and (3), $F\neq 0$ and $G\neq 0$. Therefore, applying Theorem \ref{thm1} to Eq. \eqref{p2}, we have two cases to consider\\
(i) $F=\alpha \chi+\beta \chi^*$ and $G=\dfrac{\chi+\chi^*}{2}$, where $\chi:S\rightarrow\mathbb{C}$ is a non-zero multiplicative function and $(\alpha ,\beta)\neq (0,0)$ are constants. On the other hand, since $(f,g)$ is a solution of Eq. \eqref{kw}, we can see that for all $x,y\in S$
\[F(xy)+F(\sigma(y)x)=\dfrac{2f(x)g(y)}{\displaystyle \int_{S} g(s)d\mu(s)}.\]
So, Eq. \eqref{p2} can be written as
\begin{equation}
\dfrac{f(x)g(y)}{\displaystyle \int_{S} g(s)d\mu(s)}=F(x)G(y).
\label{p3}
\end{equation}
Thus, replacing $(x,y)$ by $(x,s)$ in Eq. \eqref{p3} and integrating the result obtained with respect to $s$, we get that
 $$f(x)=F(x)\displaystyle \int_{S} G(s)d\mu(s),$$ 
 for all $x\in S$. That is 
\[f=\dfrac{\alpha \chi+\beta \chi^*}{2}\left(\displaystyle \int_{S} \chi(s)d\mu(s)+\displaystyle \int_{S} \chi^*(s)d\mu(s) \right) . \]
In addition, since $f\neq 0$, we get from Eq. \eqref{p3} that $g=\gamma G$ for some constant $\gamma \in \mathbb{C}\backslash \lbrace 0\rbrace$. That is 
\[g=\dfrac{\gamma}{2}\left( \chi+\chi^*\right). \]
So, $\chi+\chi^*\neq 0$ and $\displaystyle \int_{S} \chi(s)d\mu(s)+\displaystyle \int_{S} \chi^*(s)d\mu(s)\neq 0$, since $g\neq 0$ and $f\neq 0$. Thus, inserting the forms of $f$ and $g$ in Eq. \eqref{kw}, we obtain after some simplifications
\begin{equation}
\alpha \left( \displaystyle \int_{S} \chi(s)d\mu(s)-\gamma\right) \chi+\beta\left( \displaystyle \int_{S} \chi^*(s)d\mu(s)-\gamma\right) \chi^*=0.
\label{p4}
\end{equation}
If $\chi=\chi^*$, then $f=\displaystyle \int_{S} \chi(s)d\mu(s)(\alpha+\beta)\chi$ and the identity above yields $\gamma=\displaystyle \int_{S} \chi(s)d\mu(s)$ since $\alpha+\beta\neq 0$ because $f\neq 0$. This occurs in case (2) with $\lambda_2:=\displaystyle \int_{S} \chi(s)d\mu(s)(\alpha+\beta)$ and $m:=\chi=\chi^*$.\par
Now, if $\chi\neq\chi^*$, then by applying \cite[Theorem 3.18]{ST1}, Eq. \eqref{p4} yields
\begin{equation}
\left\{\begin{array}{l}
\alpha\displaystyle \int_{S} \chi(s)d\mu(s)-\alpha \gamma=0 \\
\beta \displaystyle \int_{S} \chi^*(s)d\mu(s)-\beta\gamma=0 
\end{array}\right.
\label{p5}
\end{equation}
So, we proceed as in case 1 (i) by discussing three cases.\\
(a) $\alpha\neq 0$ and $\beta=0$. We get $\gamma =\displaystyle \int_{S} \chi(s)d\mu(s)$. This is case (2) with $m:=\chi$ and $\lambda_2:=\dfrac{\alpha}{2}\left(\displaystyle \int_{S} \chi(s)d\mu(s)+\displaystyle \int_{S} \chi^*(s)d\mu(s) \right) $.\\
(b) $\alpha= 0$ and $\beta\neq 0$. This implies $\gamma =\displaystyle \int_{S} \chi^*(s)d\mu(s)$. This occurs in case (2) with $m:=\chi^*$ and $\lambda_2:=\dfrac{\beta}{2}\left(\displaystyle \int_{S} \chi(s)d\mu(s)+\displaystyle \int_{S} \chi^*(s)d\mu(s) \right) $.\\
(c) $\alpha\neq 0$ and $\beta\neq 0$. Eq. \eqref{p5} yields $\gamma=\displaystyle \int_{S} \chi(s)d\mu(s)=\displaystyle \int_{S} \chi^*(s)d\mu(s)$. This is case (3) with $\alpha_1:=\alpha\displaystyle \int_{S} \chi(s)d\mu(s)$ and $\beta_1:=\beta\displaystyle \int_{S} \chi(s)d\mu(s)$.\\
 (ii) $F=\gamma_1\chi+\phi$ and $G=\chi$, where $\chi:S\rightarrow\mathbb{C}$ is a non-zero multiplicative function, $\phi$ is a non-zero solution of the sine addition law \eqref{spsine} and $\gamma_1\in \mathbb{C}$ is a constant such that $\chi^*=\chi$ and $\phi^*=-\phi$. Since $f=\displaystyle \int_{S} G(s)d\mu(s)F$ and $g=\gamma G$, where $\gamma \in \mathbb{C}\backslash \lbrace 0\rbrace$ is a constant, we get that 
 \[f=\displaystyle \int_{S} \chi(s)d\mu(s)(\gamma_1\chi+\phi)\quad\text{and}\quad g=\gamma\chi.\]
 Since $f\neq 0$, we can see that $\displaystyle \int_{S} \chi(s)d\mu(s)\neq 0$. Thus, inserting the forms of $f$ and $g$ in Eq. \eqref{kw}, we get after some rearrangements that 
\[\left( \gamma-\displaystyle \int_{S} \chi(s)d\mu(s)\right) \phi=\left( \gamma_1\left( \displaystyle \int_{S} \chi(s)d\mu(s)-\gamma\right) +\displaystyle \int_{S} \phi(s)d\mu(s)\right) \chi.\]
This implies that, since $\chi^*=\chi$ and $\phi^*=-\phi$ 
\[\gamma=\displaystyle \int_{S} \chi(s)d\mu(s)\quad\text{and}\quad \displaystyle \int_{S} \phi(s)d\mu(s)=0.\]
This occurs in case (4) with $\lambda:=\displaystyle \int_{S} \chi(s)d\mu(s)$.\par 
Conversely, if $f$ and $g$ are of the forms (1)--(4) in Theorem \ref{thm} we check by elementary computations that $f$ and $g$ satisfy the functional equation \eqref{kw}. This completes the proof of Theorem \ref{thm}.
\end{proof}
\subsection{Some consequences}

The first consequence of our first main result  reads as follows
\begin{thm}
The solutions $f,g:S\rightarrow\mathbb{C}$ of the functional equation \eqref{k} with $g\neq 0$ can be listed as follows
\begin{enumerate}
\item[(1)] $f=0$ and $g\neq 0$ arbitrary.
\item[(2)] $f=\lambda_2 m$ and $g=\dfrac{m(z_0)}{2}(m+m^*)$, where $m:S\rightarrow\mathbb{C}$ is a multiplicative function and $\lambda_2\in \mathbb{C}\backslash \lbrace 0\rbrace$ a constant such that $m(z_0)\neq 0$.
\item[(3)] $f=\alpha_1\chi+\beta_1 \chi^*$ and $g=\dfrac{\chi(z_0)}{2}(\chi+\chi^*)$, where $\chi:S\rightarrow\mathbb{C}$ is a multiplicative function and $\alpha_1,\beta_1\in \mathbb{C}$ are constants such that $\left(\alpha_1,\beta_1 \right)\neq (0,0) $,  $\chi\neq \chi^*$,  $\chi(z_0)=\chi^*(z_0)$ and $\chi(z_0)\neq 0$.
\item[(4)] $f=\lambda(\gamma_1\chi+\phi)$ and $g=\chi(z_0)\chi$, where $\chi:S\rightarrow\mathbb{C}$ is a non-zero multiplicative function, $\phi$ is a non-zero solution of the sine addition law \eqref{spsine}, $\lambda\in \mathbb{C}\backslash \lbrace 0\rbrace,\gamma_1\in \mathbb{C}$ are constants such that $\chi^*=\chi$, $\phi^*=-\phi$, $\chi(z_0)\neq 0$ and $\phi(z_0)=0$.
\end{enumerate}
Note that, of the exceptional case (1), $f$ and $g$ are abelian.
\label{thm2}
\end{thm}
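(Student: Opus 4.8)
The plan is to recognize Equation \eqref{k} as the special instance of Equation \eqref{kw} obtained by choosing $\mu$ to be the single Dirac measure $\delta_{z_0}$. Since $z_0\in Z(S)$, this $\mu$ is (trivially) a linear combination of Dirac measures concentrated at central points, so it satisfies the standing hypothesis of Theorem \ref{thm}. For $\mu=\delta_{z_0}$ the two integrals on the left-hand side of \eqref{kw} collapse to $f(xyz_0)$ and $f(\sigma(y)xz_0)$, which is exactly \eqref{k}; conversely every solution of \eqref{k} is a solution of \eqref{kw} for this particular $\mu$. Hence the solution set of \eqref{k} is read off directly from Theorem \ref{thm} upon substituting $\mu=\delta_{z_0}$.

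The remaining work is pure bookkeeping: evaluate at $\delta_{z_0}$ each of the integral quantities appearing in families (1)--(4) of Theorem \ref{thm}, using $\int_S m(s)\,d\mu(s)=m(z_0)$, $\int_S \chi(s)\,d\mu(s)=\chi(z_0)$, $\int_S \chi^*(s)\,d\mu(s)=\chi^*(z_0)$ and $\int_S \phi(s)\,d\mu(s)=\phi(z_0)$. Making these replacements turns family (2) into $f=\lambda_2 m$, $g=\tfrac{m(z_0)}{2}(m+m^*)$ with $m(z_0)\neq 0$; family (3) into $f=\alpha_1\chi+\beta_1\chi^*$, $g=\tfrac{\chi(z_0)}{2}(\chi+\chi^*)$ with $\chi\neq\chi^*$ and $\chi(z_0)=\chi^*(z_0)\neq 0$; and family (4) into $f=\lambda(\gamma_1\chi+\phi)$, $g=\chi(z_0)\chi$ with $\chi^*=\chi$, $\phi^*=-\phi$, $\chi(z_0)\neq 0$ and $\phi(z_0)=0$. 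Family (1) is unchanged, and the abelianity assertion transfers verbatim.

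For the converse direction one may either invoke the converse already established inside Theorem \ref{thm}, or simply insert each listed pair $(f,g)$ back into \eqref{k} and verify it using multiplicativity of $m$ and $\chi$, the sine addition law \eqref{spsine} for $\phi$, and the centrality of $z_0$; each verification is a one-line computation.

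I do not expect a genuine obstacle here: the entire argument is a specialization of Theorem \ref{thm}. The only points deserving a moment's care are checking that $\delta_{z_0}$ really does satisfy the measure hypothesis (immediate from $z_0\in Z(S)$) and making sure the nondegeneracy side conditions translate to the sharp statements $\chi(z_0)\neq 0$, $\phi(z_0)=0$, $\chi(z_0)=\chi^*(z_0)$, rather than to weaker ones.
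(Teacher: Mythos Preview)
Your proposal is correct and follows exactly the same approach as the paper: the paper's own proof is a one-line remark that the result follows from Theorem \ref{thm} by taking $\mu=\delta_{z_0}$ with $z_0\in Z(S)$. Your additional bookkeeping (translating each integral condition into its pointwise form at $z_0$) merely spells out what the paper leaves implicit.
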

\begin{proof}
Follows easily from Theorem \ref{thm} by taking $\mu=\delta_{z_0}$, where $z_0$ is a fixed element in $Z(S)$.
\end{proof}
\begin{rem}
If $g=0$, the functional equation \eqref{k} can be written as 
\[f(xyz_0)+f(\sigma(y)xz_0)=0,\quad x,y\in S.\]
Then, for all $x,y,z\in S$
\[f(xyzz_0)=-f(\sigma(yz)xz_0)=f(\sigma(z)xyz_0)=-f(xyzz_0).\] 
Thus, $f(xyzz_0)=0$. So, if $S$ is a regular semigroup, we can see that $f=0$ on $Sz_0:=\lbrace xz_0\  \vert \ x\in S\rbrace$, but $f$ does not vanish necessarily on $S$. See \cite[Remark 3.1]{Ase1}.
\end{rem}
The second consequence of Theorem \ref{thm}  is the following result on the solution of the variant of Jensen's functional equation \eqref{jen}.
\begin{cor}
Let $f:S\rightarrow\mathbb{C}$ be a solution of Eq. \eqref{jen}. Then $f$ has the form $f=\gamma_1+A$, where $A$ is a non-zero additive function and $\gamma_1\in \mathbb{C}$ is a constant such that $A^*=-A$ and $A(z_0)=0$.\par 
Note that, $f$ is abelian.
\label{co1}
\end{cor}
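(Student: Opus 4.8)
The plan is to recognize Eq.~\eqref{jen} as the special case of Eq.~\eqref{k} in which $g$ is the constant function $1$, and then to read off the possible shapes of $f$ from Theorem~\ref{thm2}. Since $1:S\rightarrow\mathbb{C}$ is multiplicative and non-zero, Theorem~\ref{thm2} applies to the pair $(f,1)$, so $f$ must fit into one of the four families listed there; the whole task consists in deciding which of those families are compatible with $g\equiv 1$.

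Families (2) and (3) force $\frac{m(z_0)}{2}(m+m^{*})\equiv 1$, respectively $\frac{\chi(z_0)}{2}(\chi+\chi^{*})\equiv 1$. Writing $m$ for the multiplicative function involved and $c:=2/m(z_0)$, this says $m^{*}(x)=c-m(x)$ for all $x\in S$. Plugging this into the multiplicativity of $m^{*}=m\circ\sigma$ and using $m(xy)=m(x)m(y)$ yields, after rearrangement,
\[
(2m(y)-c)\,m(x)=c\,m(y)-c^{2}+c\qquad(x,y\in S).
\]
If $2m(y_0)-c\neq 0$ for some $y_0$, this forces $m$ to be constant; otherwise $m\equiv c/2$ is again constant. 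A non-zero multiplicative constant equals $1$, so $m\equiv 1$. Consequently family (3), which requires $\chi\neq\chi^{*}$, is impossible, and family (2) degenerates to $f=\lambda_2$, a constant; family (1) is $f=0$. In each of these cases $f$ is constant, i.e.\ of the asserted form $f=\gamma_1+A$ with $A=0$.

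It remains to handle family (4). There $g=\chi(z_0)\chi\equiv 1$ forces $\chi$ to be a non-zero multiplicative constant, hence $\chi\equiv 1$ and $\chi(z_0)=1$. The sine addition law \eqref{spsine} for $\phi$ then reduces to $\phi(xy)=\phi(x)+\phi(y)$, so $\phi$ is additive, and the remaining side conditions read $\phi\neq 0$, $\phi^{*}=-\phi$, $\phi(z_0)=0$. Thus $f=\lambda(\gamma_1+\phi)$; setting $A:=\lambda\phi$, a non-zero additive function with $A^{*}=-A$ and $A(z_0)=0$, and renaming the scalar $\lambda\gamma_1$ as $\gamma_1$, gives exactly $f=\gamma_1+A$. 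The converse inclusion is immediate. Finally, abelianity follows since any additive $A$ satisfies $A(xyz)=A(x)+A(y)+A(z)$, which is symmetric in $x,y,z$, and a constant is trivially abelian; hence $f=\gamma_1+A$ is abelian.

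I do not expect a genuine obstacle: the only step requiring an argument rather than bookkeeping against Theorem~\ref{thm2} is the observation that a multiplicative function $m$ with $m+m^{*}$ (or $\chi(z_0)\chi$) constant must be identically $1$. The one small point of care is that the constant solutions of \eqref{jen}, in particular $f=0$, are precisely the $A=0$ degenerations of the stated formula, which is why the ``exceptional'' families (1)--(3) cause no difficulty.
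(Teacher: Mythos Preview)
Your approach coincides with the paper's: apply Theorem~\ref{thm2} to the pair $(f,1)$ and identify case~(4), whence $\chi\equiv 1$ and $\phi$ becomes the required additive $A$. You are in fact more thorough, since the paper simply asserts that ``the only possibility for $g$ to be $1$ is in case~(4)'' without argument, whereas you justify this by showing that $m+m^{*}$ constant forces $m\equiv 1$ (ruling out~(3) and collapsing~(2) to a constant $f$); your remark that the constant solutions are the $A=0$ degenerations quietly flags that the corollary's ``non-zero'' clause overlooks these---a wrinkle in the statement rather than in your reasoning.
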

\begin{proof}
Let $f$ be a solution of Eq. \eqref{jen}. By applying Theorem \ref{thm2} with $g=1$ we can see that the only possibility for $g$ to be $1$ is in case (4) of Theorem \ref{thm2}. That is for $\chi=\chi^*=1$. So, $f=\gamma_1+\phi$ where $\gamma_1\in \mathbb{C}$ is a constant and $\phi$ is a non-zero solution of the functional equation 
\[\phi(xy)=\phi(x)+\phi(y),\quad x,y\in S,\]
such that $\phi^*=-\phi$ and $\phi(z_0)=0$. That is $\phi$ additive which occurs in Corollary \ref{co1} with $A:=\phi$.\par 
Conversely, simple computations prove that the formula above for $f$ defines a solution of Eq. \eqref{jen}. This completes the proof. 
\end{proof}
An other interesting consequence is the following corollary about solutions of Eq. \eqref{id}.
\begin{cor}
The solutions $f,g:S\rightarrow\mathbb{C}$ be a solution of Eq. \eqref{id} with $g\neq 0$ are the following
\begin{enumerate}
\item[(1)] $f=0$ and $g\neq 0$ arbitrary.
\item[(2)] $f=\lambda_2 \chi$ and $g=\chi(z_0)\chi$, where $\chi:S\rightarrow\mathbb{C}$ is a non-zero multiplicative function and $\lambda_2\in \mathbb{C}\backslash \lbrace 0\rbrace$ a constant such that $\chi(z_0)\neq 0$.
\end{enumerate}
Note that in case (2), $f$ and $g$ are abelian.
\label{co2}
\end{cor}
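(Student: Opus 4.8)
The plan is to obtain Corollary~\ref{co2} as the special case $\sigma=\mathrm{id}$ of Theorem~\ref{thm2}. The identity map is an involutive automorphism of $S$, and Eq.~\eqref{id} is literally Eq.~\eqref{k} written with $\sigma=\mathrm{id}$, so every solution pair $(f,g)$ of \eqref{id} with $g\neq0$ occurs in one of the four families listed in Theorem~\ref{thm2}. The key point is that, when $\sigma=\mathrm{id}$, one has $u^{*}=u$ for every function $u:S\rightarrow\mathbb{C}$; in particular $\chi^{*}=\chi$ for every multiplicative function $\chi$.

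First I would rule out families (3) and (4) of Theorem~\ref{thm2}. Family (3) presupposes $\chi\neq\chi^{*}$, which cannot happen here. Family (4) presupposes a non-zero solution $\phi$ of the sine addition law~\eqref{spsine} with $\phi^{*}=-\phi$; but $\phi^{*}=\phi$ then forces $\phi=0$, a contradiction. Hence only families (1) and (2) of Theorem~\ref{thm2} survive. Family (1) is exactly case (1) of the corollary. In family (2) we have $f=\lambda_{2}m$, and since $m^{*}=m$ the formula for $g$ simplifies to $g=\frac{m(z_{0})}{2}(m+m^{*})=m(z_{0})\,m$ with $m(z_{0})\neq0$; writing $\chi:=m$ gives precisely case (2).

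For the converse one checks directly that the pairs in case (2) solve~\eqref{id}: using multiplicativity of $\chi$ and centrality of $z_{0}$, $f(xyz_{0})+f(yxz_{0})=\lambda_{2}\chi(z_{0})\bigl(\chi(x)\chi(y)+\chi(y)\chi(x)\bigr)=2\bigl(\lambda_{2}\chi(x)\bigr)\bigl(\chi(z_{0})\chi(y)\bigr)=2f(x)g(y)$. The closing remark is immediate, since a multiplicative function is abelian: $\chi(uv)=\chi(u)\chi(v)=\chi(vu)$ and $\chi(uvw)=\chi(u)\chi(v)\chi(w)=\chi(uwv)$, so in case (2) both $f=\lambda_{2}\chi$ and $g=\chi(z_{0})\chi$ are abelian. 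I do not expect any real obstacle: the whole argument is the bookkeeping observation that the ``$\sigma$-odd'' ingredients of Theorem~\ref{thm2}---namely a multiplicative $\chi$ with $\chi\neq\chi^{*}$ and a non-zero $\phi$ with $\phi^{*}=-\phi$---disappear when $\sigma$ is the identity.
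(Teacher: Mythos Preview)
Your proposal is correct and follows exactly the paper's approach: the paper's proof is the single sentence ``The result follows directly from Theorem~\ref{thm2} by taking $\sigma=id$,'' and you have simply spelled out the bookkeeping behind that sentence (why families (3) and (4) collapse and how family (2) simplifies).
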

\begin{proof}
The result follows directly from Theorem \ref{thm2} by taking $\sigma=id$.
\end{proof}
\section{Van Vleck-Wilson functional equations}
\subsection{Solutions of Eq. \eqref{Van1}} 
The following lemma will be used later
\begin{lem}
Let $F,G:S\rightarrow\mathbb{C}$ be a solution of the functional equation 
\begin{equation}
F(xy)=F(x)G(y)-F(y)G(x)\quad x,y\in S.
\label{u1}
\end{equation}
Then, $F$ and $G$ are linearly dependent.
\label{LV}
\end{lem}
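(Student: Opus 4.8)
The plan is to prove the statement in the following sharp form: if $F\neq 0$, then $G$ is a scalar multiple of $F$ (the case $F=0$ being trivial, since then $F$ and $G$ are dependent). Observe first that the right-hand side of \eqref{u1} is the $2\times 2$ determinant with rows $(F(x),F(y))$ and $(G(x),G(y))$; in particular $y=x$ gives $F(x^{2})=0$, so $F$ vanishes on all squares. Fix an element $a\in S$ with $F(a)\neq 0$ and put $c:=G(a)/F(a)$. Since $S$ need not possess an identity element, one cannot imitate the group-theoretic trick of substituting a neutral element; instead everything must be bootstrapped from this single $a$, and this is where the work lies.

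The engine is associativity. Writing $F(xyz)$ both as $F((xy)z)$ and as $F(x(yz))$, applying \eqref{u1} to each, and then using \eqref{u1} once more to expand $F(xy)$ and $F(yz)$, the terms proportional to $F(y)$ cancel and one is left with
\[F(x)\bigl[G(y)G(z)-G(yz)\bigr]=F(z)\bigl[G(xy)+G(x)G(y)\bigr],\qquad x,y,z\in S.\qquad(\ast)\]
Now I would specialize $(\ast)$ in stages. Putting $z=a$ expresses $G(xy)$ through $G(x)$, $G(y)$, $F(x)$ and the single value $G(ya)$; putting moreover $x=y=a$ forces $G(a^{2})=0$; and then $y=a$ yields a closed formula for $G(xa)$. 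Feeding these back into the expression for $G(xy)$ produces the bilinear identity
\[G(xy)=-G(x)G(y)+2c\,F(x)G(y)-c^{2}F(x)F(y),\qquad x,y\in S.\qquad(\ast\ast)\]

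For the conclusion I would run the complementary specialization of $(\ast)$: setting $x=a$ there and using $(\ast\ast)$ to evaluate $G(ay)$, one obtains a second closed formula, $G(yz)=G(y)G(z)-2c\,F(z)G(y)+c^{2}F(y)F(z)$. Comparing this with $(\ast\ast)$ applied to the product $yz$ and cancelling, everything collapses to $(G(y)-cF(y))(G(z)-cF(z))=0$ for all $y,z\in S$; taking $y=z$ gives $G(y)=cF(y)$ for every $y\in S$. Hence $G=cF$, so $F$ and $G$ are linearly dependent. The only delicate point is organizing the specializations so that $G(a^{2})$, $G(xa)$ and $G(ay)$ are all extracted from $(\ast)$ and \eqref{u1} alone, with no circularity; once the two closed expressions for $G$ on products are in hand, the final comparison is immediate.
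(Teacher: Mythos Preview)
Your argument is correct. The associativity identity $(\ast)$ is derived correctly, the successive specializations $z=a$, then $x=y=a$, then $y=a$ do yield $G(a^{2})=0$ and the closed form $G(xa)=cG(a)F(x)-G(a)G(x)$, and feeding this back indeed gives $(\ast\ast)$. The complementary specialization $x=a$ together with $(\ast\ast)$ produces the second formula $G(yz)=G(y)G(z)-2cF(z)G(y)+c^{2}F(y)F(z)$, and equating it with $(\ast\ast)$ (in the variables $y,z$) collapses to $(G(y)-cF(y))(G(z)-cF(z))=0$, whence $G=cF$. There is no circularity: each ingredient is extracted from $(\ast)$ and previously established facts only.

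Your route is, however, quite different from the paper's. The paper does not prove the lemma directly at all; it simply invokes \cite[Theorem~5.2]{Ase2}, a classification of the solutions of the sine subtraction law $F(x\sigma(y))=F(x)G(y)-F(y)G(x)$ with an involutive automorphism $\sigma$, and reads off the case $\sigma=\mathrm{id}$. So the paper's ``proof'' is a one-line appeal to an external structural result, whereas you give a self-contained elementary argument from associativity alone. Your approach has the advantage of requiring no outside machinery and of making transparent exactly why the antisymmetric right-hand side forces dependence; the paper's approach, on the other hand, situates the lemma as a degenerate instance of a broader theory already developed elsewhere.
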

\begin{proof}
Follows from \cite[Theorem 5.2]{Ase2} by taking $\sigma=id$.
\end{proof}
The next theorem gives the general solution of \eqref{Van1}.
\begin{thm}
The solutions $f,g:S\rightarrow\mathbb{C}$ of Eq. \eqref{Van1} with $g\neq 0$, namely 
\[f(xy)-f(\sigma(y)x)=2f(x)g(y),\quad x,y\in S,\]
 are the following
\begin{enumerate}
\item[(1)] $f=0$ and $g\neq 0$ arbitrary.
\item[(2)] $f=\lambda\chi$ and $g=\dfrac{\chi-\chi^*}{2}$, where $\chi:S\rightarrow\mathbb{C}$ is a multiplicative function and $\lambda\in \mathbb{C}\backslash \lbrace 0\rbrace$ is a constant such that $\chi\neq \chi^*$.
\end{enumerate}
Note that in case (2), $f$ and $g$ are abelian.
\label{thmV1}
\end{thm}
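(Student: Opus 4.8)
The plan is as follows. If $f=0$ then \eqref{Van1} holds with $g$ arbitrary, which is family (1); so assume henceforth that $f\neq 0$ and $g\neq 0$. The key step will be to show that there is a multiplicative function $\chi:S\rightarrow\mathbb{C}$ with
\[
f(xy)=f(x)\chi(y),\qquad x,y\in S,
\]
after which everything else falls out.

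To obtain such a relation I would use associativity. Expanding $f(xyz)$ by grouping it as $(xy)z$ and applying \eqref{Van1} first to the pair $(xy,z)$ and then to the pair $(\sigma(z)x,y)$, and comparing with the expansion obtained by grouping $f(xyz)$ as $x(yz)$ and applying \eqref{Van1} to $(x,yz)$ (so that $\sigma(yz)=\sigma(y)\sigma(z)$ appears), the two occurrences of $f(\sigma(y)\sigma(z)x)$ cancel and one is left with the identity
\[
f(\sigma(z)x)g(y)+f(xy)g(z)=f(x)g(yz),\qquad x,y,z\in S,
\]
which I will call $(\ast)$. Now I would specialise. Setting $z=y$ in $(\ast)$ and eliminating $f(\sigma(y)x)$ by means of \eqref{Van1} yields $2f(xy)g(y)=f(x)\bigl(g(y^{2})+2g(y)^{2}\bigr)$; since $g\neq 0$, choosing $b$ with $g(b)\neq 0$ this shows that $x\mapsto f(xb)$ and $x\mapsto f(\sigma(b)x)$ are both scalar multiples of $f$. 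Substituting these two facts back into $(\ast)$ taken with $z=b$ collapses the right-hand side and produces $f(xy)=f(x)\chi(y)$ with the explicit $\chi(y)=\dfrac{g(yb)}{g(b)}-\dfrac{\bigl(g(b^{2})-2g(b)^{2}\bigr)g(y)}{2g(b)^{2}}$. This string of specialisations is the technical core of the proof and the part I expect to require the most care.

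Once $f(xy)=f(x)\chi(y)$ is available, comparing $f((xy)z)$ with $f(x(yz))$ and using $f\neq 0$ gives $\chi(yz)=\chi(y)\chi(z)$, so $\chi$ is multiplicative; moreover $\chi\neq 0$, since otherwise $f$ would vanish on all products and \eqref{Van1} would force $f(x)g(y)=0$ for all $x,y$, contradicting $f\neq 0\neq g$. Writing $f(\sigma(y)x)=f(\sigma(y))\chi(x)=f^{*}(y)\chi(x)$ and inserting this into \eqref{Van1} gives $f^{*}(y)\chi(x)=f(x)\bigl(\chi(y)-2g(y)\bigr)$. Fixing $a$ with $f(a)\neq 0$ one checks that $\chi(a)\neq 0$ (if $\chi(a)=0$ then $\chi=2g$, whence $f^{*}(y)\chi(x)=0$ and so $g=0$, a contradiction); hence $\chi-2g$ is a nonzero constant times $f^{*}$, and feeding this back in and picking $y_{0}$ with $f^{*}(y_{0})\neq 0$ yields $f=\lambda\chi$ for some $\lambda\in\mathbb{C}\setminus\{0\}$. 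Then $f^{*}=\lambda\chi^{*}$, so $\chi-2g=\chi^{*}$, i.e. $g=\tfrac{1}{2}(\chi-\chi^{*})$, and $g\neq 0$ forces $\chi\neq\chi^{*}$; this is exactly family (2). The converse is a routine verification, and since a multiplicative function is abelian, so are $f=\lambda\chi$ and the difference $g=\tfrac12(\chi-\chi^{*})$, which gives the closing remark.
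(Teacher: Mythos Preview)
Your argument is correct and takes a genuinely different route from the paper's.

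The paper proceeds by decomposing $f$ and $g$ into their even and odd parts with respect to $\sigma$, deriving from \eqref{Van1} the identity $f^{\circ}(xy)=f(x)g(y)+f^{*}(y)g(x)$, and then showing via a symmetry argument that either $f^{e}=0$ or $g^{e}=0$. Each case is then dispatched using known results on sine-type addition and subtraction laws (Lemma~\ref{LV} and \cite[Theorem~5.2]{Ase2}), and a final case analysis eliminates everything except family~(2). Your approach is more elementary and fully self-contained: you use associativity to obtain the three-variable identity $(\ast)$, specialise it to force $f(xy)=f(x)\chi(y)$ with an explicitly determined $\chi$, prove $\chi$ multiplicative directly, and read off $g=\tfrac{1}{2}(\chi-\chi^{*})$. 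This bypasses the even/odd decomposition entirely and invokes no external classification theorem. What the paper's route buys, by contrast, is the visible structural link between \eqref{Van1} and the sine subtraction law, connecting the equation to a broader family.

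One cosmetic point: in your verification that $\chi(a)\neq 0$, the phrase ``and so $g=0$'' is not the natural conclusion. From $f^{*}(y)\chi(x)=0$ and the already established fact $\chi\neq 0$ you get $f^{*}=0$, hence $f=0$, which is the contradiction you want. The logic is sound; only the stated reason should be adjusted.
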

\begin{proof}
Let $f,g:S\rightarrow\mathbb{C}$ be a solution of Eq. \eqref{Van1}. If $f=0$, we can see easily from \eqref{Van1} that $g\neq 0$ is arbitrary. This is case (1). From now on, we assume that $f\neq 0$ and $g\neq 0$. Let $x,y\in S$ be arbitrary. By applying Eq. \eqref{Van1} to the pair $(\sigma(y),x)$, we get that 
\[f(\sigma(y)x)-f^*(xy)=2f^*(y)g(x).\]
By adding this to \eqref{Van1} we obtain 
\begin{equation}
f^{\circ}(xy)=f(x)g(y)+f^*(y)g(x).
\label{V1}
\end{equation}
On the other hand, since $f=f^e+f^{\circ}$ and taking \eqref{V1} into account, we get from \eqref{Van1} that 
\begin{align*}
f^e(xy)-f^e(\sigma(y)x)&=2f(x)g(y)-f^{\circ}(xy)+f^{\circ}(\sigma(y)x)\\
&=f(x)g(y)-f^*(y)g(x)+f^{\circ}(\sigma(y)x)\\
&=f(x)g(y)+f^*(x)g^*(y).&
\end{align*}
So, since $f^*=f^e-f^{\circ}$ we deduce that 
\begin{align*}
\begin{split}
f^e(xy)-f^e(\sigma(y)x)=\left[f^e(x)+f^{\circ}(x) \right] \left[g^e(y)+g^{\circ}(y) \right] +\\\left[f^e(x)-f^{\circ}(x) \right] \left[g^e(y)-g^{\circ}(y) \right].
\end{split}
\end{align*}
That is 
\begin{equation}
f^e(xy)-f^e(\sigma(y)x)=2f^e(x)g^e(y)+2f^{\circ}(x)g^{\circ}(y).
\label{V2}
\end{equation}
Now, replacing $(x,y)$ by $(\sigma(y),x)$ in \eqref{V2} and using that $f^e\circ\sigma=f^e$ and $f^{\circ}\circ\sigma=-f^{\circ}$, we obtain 
\begin{equation}
f^e(\sigma(y)x)-f^e(xy)=2f^e(y)g^e(x)-2f^{\circ}(y)g^{\circ}(x).
\label{V3}
\end{equation}
Thus, by comparing Eq. \eqref{V2} with Eq. \eqref{V3}, we see that 
\[f^e(x)g^e(y)+f^{\circ}(x)g^{\circ}(y)=f^{\circ}(y)g^{\circ}(x)-f^e(y)g^e(x).\]
This implies 
\begin{equation}
f^e(x)g^e(y)+f^e(y)g^e(x)=f^{\circ}(y)g^{\circ}(x)-f^{\circ}(x)g^{\circ}(y).
\label{V4}
\end{equation}
In Eq. \eqref{V4}, the left hand side is an even function of $x$ while the right is an odd function of $x$, so
\begin{equation}
f^e(x)g^e(y)+f^e(y)g^e(x)=0,
\label{V5}
\end{equation}
and 
\begin{equation}
f^{\circ}(y)g^{\circ}(x)-f^{\circ}(x)g^{\circ}(y)=0,
\label{V6}
\end{equation}
for all $x,y\in S$. According to \cite[Exercise 1.1 (b)]{ST1} we deduce from \eqref{V5} that we have two possibilities: $f^e=0$ or $g^e=0$.\\
\underline{First case: $f^e=0$}. That is $f=f^{\circ}$, so $f^*=-f$ and from Eq. \eqref{V2} we can see that $g^{\circ}=0$, i.e.,  $g=g^e$. In addition Eq. \eqref{V1} becomes
\[f(xy)=f(x)g(y)-f(y)g(x).\]
According to Lemma \ref{LV}, $f$ and $g$ are linearly dependent. Then $f=0$ or $g=0$, since $f$ is odd and $g$ is even. This case does not occur since $f\neq 0$ and $g\neq 0$.\\
\underline{Second case: $g^e=0$}. That is $g=g^{\circ}$. Since $g\neq 0$, we get from Eq. \eqref{V6} that $f^{\circ}=\lambda g$ for some constant $\lambda \in \mathbb{C}$. Thus, equation \eqref{V1} becomes
\begin{equation}
\lambda g(xy)=f(x)g(y)+f^*(y)g(x).
\label{V7}
\end{equation}
Now, replacing $(x,y)$ by $(x,\sigma(y))$ in \eqref{V7} we obtain, since $g$ is odd 
\begin{equation}
\lambda g(x\sigma(y))=f(y)g(x)-f(x)g(y).
\label{V8}
\end{equation}
If $\lambda=0$ then $f^{\circ}=0$, i.e.,  $f=f^e$. In addition, we get from \eqref{V8} that $f$ and $g$ are linearly dependent. Therefore $f=0$ or $g=0$, since $f$ is even and $g$ is odd. This is a contradiction, since $f\neq 0$ and $g\neq 0$. So $\lambda \neq 0$. Thus, equation \eqref{V8} can be written as
\[g(x\sigma(y))=\dfrac{1}{\lambda} f(y)g(x)-\dfrac{1}{\lambda}f(x)g(y).\]
According to \cite[Theorem 5.2]{Ase2} and taking into account that $g$ is a non-zero odd function, we have two cases to consider\\
Case 1:  $g=\alpha(\chi-\chi^*)$ and $\dfrac{1}{\lambda}f=\dfrac{\chi+\chi^*}{2}+\alpha_1\dfrac{\chi-\chi^*}{2}$, where $\chi:S\rightarrow\mathbb{C}$ is a multiplicative function such that $\chi\neq \chi^*$, $\alpha_1\in \mathbb{C}$ and $\alpha\in \mathbb{C}\backslash \lbrace 0\rbrace$. This implies that $f=\dfrac{\lambda+\lambda \alpha_1}{2}\chi+\dfrac{\lambda-\lambda \alpha_1}{2}\chi^*$. Thus, inserting these forms into \eqref{Van1}, we obtain after some simplifications that 
\[\dfrac{\lambda+\lambda \alpha_1}{2}\chi(x)+\dfrac{\lambda \alpha_1-\lambda}{2}\chi^*(x)=\lambda \alpha(1+\alpha_1)\chi(x)+\lambda \alpha(1-\alpha_1)\chi^*(x),\]
for all $x\in S$. Therefore, since $\chi\neq \chi^*$, by the help of \cite[Theorem 3.18]{ST1} we obtain, since $\lambda \neq 0$ 
\[1+\alpha_1=2 \alpha(1+\alpha_1)\ \ \text{and}\ \ \ \alpha_1-1=2 \alpha(1-\alpha_1).\]
This implies $\alpha_1=2\alpha$. Then $\alpha_1^2=1$. That is $\alpha_1=1$ or $\alpha_1=-1$. So, $\alpha=\dfrac{1}{2}$ or $\alpha=\dfrac{-1}{2}$. Thus 
$$\left\{\begin{array}{l}
f=\lambda\chi \\
g=\dfrac{\chi-\chi^*}{2}
\end{array}\right.\quad\text{or}\quad\left\{\begin{array}{l}
f=\lambda \chi^* \\
g=\dfrac{\chi^*-\chi}{2}
\end{array}.\right.$$
The pair $(f, g)$ on the left falls into case (2). So does the pair on the right, except that there $\chi$ is replaced by $\chi^*$.\\
Case 2: $g=\phi$ and $\dfrac{1}{\lambda}f=\chi+\lambda_2\phi$, where $\chi:S\rightarrow\mathbb{C}$ is a non-zero multiplicative
function, $\phi$ is a non-zero solution of the sine addition law \eqref{spsine} such that $\chi^*=\chi$, $\phi^*=-\phi$ and $\lambda_2\in \mathbb{C}$ is a constant. So $f=\lambda \chi+\lambda c_2\phi$. Therefore, inserting these forms into \eqref{Van1}, we get after some simplifications that 
\[c_2\chi(x)\phi(y)=(\chi(x)+\lambda_2\phi(x))\phi(y),\]
for all $x,y\in S$. This implies, since $\phi\neq 0$ that $(\lambda_2-1)\chi(x)=c_2\phi(x)$ for all $x\in S$. Thus $\lambda_2=1$ and $\lambda_2=0$ since $\chi$ is even and $\phi$ is add, which is impossible. This case does not occur.\par 
Conversely, we check by elementary computations that the pairs described in Theorem \ref{thmV1} are solutions of Eq. \eqref{Van1}. This completes the proof of Theorem \ref{thmV1}.
\end{proof}
\begin{rem}
The only solution $f:S\rightarrow\mathbb{C}$ of the functional equation 
\[f(xy)-f(\sigma(y)x)=2f(x)f(y),\quad x,y\in S,\]
is $f \equiv 0$.
\end{rem}
\subsection{Solutions of Eq. \eqref{Van2} and Eq. \eqref{Van3}} 
In the next lemma we give some key properties of solutions of \eqref{Van2}, namely 
$$\displaystyle \int_{S} f(xyt)d\mu(t) -\displaystyle \int_{S} f(\sigma(y)xt)d\mu(t)= 2f(x)g(y),\  x,y\in S.$$
\begin{lem}
Let $f,g:S\rightarrow\mathbb{C}$ be a solution of Eq. \eqref{Van2} such that $f\neq 0$ and $g\neq 0$. Let $h$ and $F$ be the functions defined in Lemma \ref{L1}. The following statements hold
\begin{enumerate}
\item[(1)] Suppose $\displaystyle \int_{S} g(s)d\mu(s)\neq 0$.Then $F\neq 0$, and
\begin{equation}
F(xy)+F(\sigma(y)x)=2F(x)G_1(y),\quad x,y\in S,
\label{M2}
\end{equation}
where $G_1(x):=\dfrac{\displaystyle \int_{S} g(xt)d\mu(t)- \displaystyle \int_{S} g(x\sigma(t))d\mu(t)}{2\displaystyle \int_{S} g(s)d\mu(s)}$ and $G_1\neq 0$.
\item[(2)] If $\displaystyle \int_{S} g(s)d\mu(s)=0$, then $h(x)=\lambda_1 f(x)$ for all $x\in S$, where $\lambda_1 \in \mathbb{C}\backslash \left\lbrace  0\right\rbrace $ is a constant.
\end{enumerate}
\label{LVan2}
\end{lem}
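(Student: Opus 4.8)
The plan is to follow the proof of Lemma~\ref{L1} step by step, carrying the extra minus sign of Eq.~\eqref{Van2} through the computation. First note that $h\neq 0$: if $h=0$, then \eqref{Van2} becomes $2f(x)g(y)=0$ for all $x,y\in S$, contradicting $f\neq 0$ and $g\neq 0$. In the situation of (1), where $\int_{S}g(s)d\mu(s)\neq 0$, this immediately gives $F=h/\int_{S}g(s)d\mu(s)\neq 0$.

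For Eq.~\eqref{M2} in part (1), I would apply \eqref{Van2} to the pairs $(xyt,s)$ and $(\sigma(y)xt,s)$, and separately to the pairs $(xt,ys)$ and $(xt,y\sigma(s))$, exactly as in the derivation of Eqs.~\eqref{i1}--\eqref{i2} and \eqref{m2}--\eqref{m3}; the resulting four identities now carry a minus sign (between their two integral terms) where those of Lemma~\ref{L1} carry a plus. Adding the analogues of \eqref{i1} and \eqref{i2} (integrated in $s,t$) and substituting the analogues of \eqref{m2} and \eqref{m3}, and using that the $z_i$ are central, one is led to
\[
\left(\int_{S}g(s)d\mu(s)\right)\big[h(xy)+h(\sigma(y)x)\big]
=h(x)\left[\int_{S}g(ys)d\mu(s)-\int_{S}g(y\sigma(s))d\mu(s)\right]
\]
for all $x,y\in S$. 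The difference from \eqref{m1} is that, since \eqref{Van2} relates the \emph{difference} of the two integrals to $2f(x)g(y)$, the left-hand side no longer collapses via the equation; instead the combination $h(xy)+h(\sigma(y)x)$ survives, and a difference of translates of $g$ (not a sum) appears on the right. Dividing by $\left(\int_{S}g(s)d\mu(s)\right)^{2}$ and recognising $F$ and $G_{1}$ gives \eqref{M2}.

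The step I expect to be the real obstacle is $G_{1}\neq 0$, since — unlike in Lemma~\ref{L1}(3), where $G\neq 0$ falls out of \eqref{m1} at once — it here needs a separate argument. Suppose $G_{1}=0$. Then \eqref{M2} gives $F(xy)+F(\sigma(y)x)=0$, while \eqref{Van2} gives $F(xy)-F(\sigma(y)x)=2f(x)g(y)/\int_{S}g(s)d\mu(s)$; adding these yields $h(xy)=f(x)g(y)$ and $h(\sigma(y)x)=-f(x)g(y)$ for all $x,y\in S$. Since $h(ab)=f(a)g(b)$ holds for all $a,b\in S$, the second relation reads $f^{*}(y)g(x)+f(x)g(y)=0$; fixing $x$ with $f(x)\neq 0$ forces $g=\kappa f^{*}$ for some $\kappa\neq 0$, and feeding this back gives $\kappa f^{*}(y)\big(f^{*}(x)+f(x)\big)=0$, so $f^{*}=-f$ and $g=cf$ with $c\neq 0$, both odd. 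Then $h(xy)=cf(x)f(y)$, so $h^{*}(xy)=h(xy)$; on the other hand, since $f$ is odd and $\sigma(z_i)\in Z(S)$, one computes $\int_{S}g(yt)d\mu(t)=c\,h(y)$ and $\int_{S}g(y\sigma(t))d\mu(t)=-c\,h^{*}(y)$, so the numerator of $G_{1}$ equals $c\big(h(y)+h^{*}(y)\big)$ and $G_{1}=0$ forces $h^{*}=-h$. Combining $h^{*}=h$ on products with $h^{*}=-h$ gives $h\equiv 0$ on all products, hence $cf(x)f(y)=0$, hence $f=0$ — a contradiction. Thus $G_{1}\neq 0$.

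For part (2), assume $\int_{S}g(s)d\mu(s)=0$. Replacing $(x,y)$ by $(x,s)$ in \eqref{Van2} and integrating over $s$ gives, as in \eqref{m4} but with the opposite sign, $\int_{S}\int_{S}f(xst)\,d\mu(t)d\mu(s)=\int_{S}\int_{S}f(\sigma(s)xt)\,d\mu(t)d\mu(s)$. Using this to rewrite the $\sigma(ys)$-terms, I would then replace $(x,y)$ by $(x,ys)$ and by $(xs,y)$ in \eqref{Van2}, integrate each over $s$, and observe that by centrality of the $z_i$ one has $f(xyst)=f(xsyt)$, so the two identities obtained share the same left-hand side; this yields $f(x)\int_{S}g(ys)d\mu(s)=g(y)h(x)$ for all $x,y\in S$. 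Choosing $y_{0}$ with $g(y_{0})\neq 0$ gives $h=\lambda_{1}f$ with $\lambda_{1}=\big(\int_{S}g(y_{0}s)d\mu(s)\big)/g(y_{0})$, and $\lambda_{1}\neq 0$ because $h\neq 0$. (This is the analogue of Lemma~\ref{L1}(4), reached by a somewhat shorter route.)
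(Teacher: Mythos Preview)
Your proof is correct and follows essentially the paper's approach (which merely refers back to the computations of Lemma~\ref{L1}); in particular, your derivation in part~(2) and the paper's coincide. For $G_1\neq 0$ you supply a more detailed argument than the paper's terse ``similar computations'' --- and rightly so, since the direct analogue of Lemma~\ref{L1}(3)'s $G\neq 0$ step (which used that \eqref{m1} has $2f(x)g(y)\int_S g(s)\,d\mu(s)$ on one side) does not carry over verbatim to the minus-sign setting.
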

\begin{proof}
(1) follows by using similar computations to those of the proof of Lemma \ref{L1} (3).\\
(2) By using similar computations to those of the proof of Lemma \ref{L1} (4), we show that 
\begin{equation}
\displaystyle \int_{S}\int_{S} f(xyst)d\mu(t)d\mu(s) -\displaystyle \int_{S}\int_{S} f(\sigma(y)sxt)d\mu(t)d\mu(s)= 2f(x)\displaystyle \int_{S} g(ys)d\mu(s),
\label{D1}
\end{equation}
and 
\begin{equation}
\displaystyle \int_{S}\int_{S} f(xyst)d\mu(t)d\mu(s) -\displaystyle \int_{S}\int_{S} f(\sigma(y)xst)d\mu(t)d\mu(s)= 2g(y)\displaystyle \int_{S} f(xs)d\mu(s).
\label{D2}
\end{equation}
Thus, comparing Eq. \eqref{D1} with Eq. \eqref{D2}, we see that for all $x,y\in S$
$$f(x)\displaystyle \int_{S} g(ys)d\mu(s)=g(y)\displaystyle \int_{S} f(xs)d\mu(s).$$
Now, the result follows easily since $g\neq 0$. This completes the proof.
\end{proof}
Now, we are ready to solve the functional equation \eqref{Van2}.
\begin{thm}
The solutions $f,g:S\rightarrow\mathbb{C}$ of Eq. \eqref{Van2} with $g\neq 0$ can be listed as follows
\begin{enumerate}
\item[(1)] $f=0$ and $g\neq 0$ arbitrary.
\item[(2)] $f=\alpha_1 \chi+\alpha_2\chi^*$ and $g=\displaystyle \int_{S} \chi(s)d\mu(s)\dfrac{\chi-\chi^*}{2}$, where $\chi:S\rightarrow\mathbb{C}$ is a multiplicative function and $\alpha_1,\alpha_2\in \mathbb{C}$ are constants such that $(\alpha_1,\alpha_2)\neq (0,0)$, $\chi\neq \chi^*$ and $\displaystyle \int_{S} \chi(s)d\mu(s)=-\displaystyle \int_{S} \chi^*(s)d\mu(s)\neq 0$.\par 
Note that in case (2), $f$ and $g$ are abelian.
\end{enumerate}
\label{thmVan2}
\end{thm}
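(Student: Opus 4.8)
The plan is to follow the strategy of Theorem \ref{thm}, only replacing the appeal to Theorem \ref{thm1} by an appeal to Theorem \ref{thmV1} in the degenerate branch. Let $f,g:S\rightarrow\mathbb{C}$ solve \eqref{Van2}. If $f=0$, then \eqref{Van2} puts no restriction on $g$, which is case (1); so from now on $f\neq 0$ and $g\neq 0$, and I split according to whether $\int_S g(s)\,d\mu(s)$ vanishes.

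Suppose first $\int_S g(s)\,d\mu(s)\neq 0$. With $h(x)=\int_S f(xt)\,d\mu(t)$, evaluating \eqref{Van2} at $(x,y)$ gives $h(xy)-h(\sigma(y)x)=2f(x)g(y)$, hence, setting $F=h/\int_S g\,d\mu$, one has $F(xy)-F(\sigma(y)x)=2f(x)g(y)/\int_S g\,d\mu$. By Lemma \ref{LVan2}(1) also $F\neq 0$, $G_1\neq 0$ and $F(xy)+F(\sigma(y)x)=2F(x)G_1(y)$, so $(F,G_1)$ is a solution with nonzero factors of the equation solved in Theorem \ref{thm1}. The branch $F=\gamma_1\chi+\phi$, $G_1=\chi$ of that theorem cannot occur: there $F(xy)-F(\sigma(y)x)$ computes, via \eqref{spsine} together with $\chi^*=\chi$ and $\phi^*=-\phi$, to $2\chi(x)\phi(y)$, so by the previous identity $f(x)g(y)$ is a constant times $\chi(x)\phi(y)$; thus $f$ is proportional to $\chi$, hence $F=h/\int_S g\,d\mu$ is proportional to $\chi$, which forces $\phi$ to be a scalar multiple of $\chi$, and then $\chi^*=\chi$, $\phi^*=-\phi$ give $\phi=0$, a contradiction. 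Therefore $F=\alpha\chi+\beta\chi^*$, $G_1=\frac{1}{2}(\chi+\chi^*)$ with $(\alpha,\beta)\neq(0,0)$. Adding the two identities and integrating in $y$ against $\mu$ yields $f(x)=\int_S F(xs)\,d\mu(s)-F(x)\int_S G_1(s)\,d\mu(s)$, so after substitution $f=\frac{1}{2}(\int_S\chi\,d\mu-\int_S\chi^*\,d\mu)(\alpha\chi-\beta\chi^*)$; the identity $F(xy)-F(\sigma(y)x)=2f(x)g(y)/\int_S g\,d\mu$, whose left side equals $(\alpha\chi(x)-\beta\chi^*(x))(\chi(y)-\chi^*(y))$, then forces $g$ to be a scalar multiple of $\chi-\chi^*$ — in particular $\chi\neq\chi^*$, for otherwise $fg\equiv 0$. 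Imposing finally the compatibility $F=h/\int_S g\,d\mu$ on these explicit forms pins down the remaining constants together with $\int_S\chi\,d\mu$ and $\int_S\chi^*\,d\mu$, and one reads off that $(f,g)$ is of the form (2).

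Suppose next $\int_S g(s)\,d\mu(s)=0$. Lemma \ref{LVan2}(2) provides $\lambda_1\in\mathbb{C}\setminus\{0\}$ with $h=\lambda_1 f$; since $h(xy)=\int_S f(xyt)\,d\mu(t)$ and $h(\sigma(y)x)=\int_S f(\sigma(y)xt)\,d\mu(t)$, equation \eqref{Van2} reduces to $f(xy)-f(\sigma(y)x)=2f(x)(g/\lambda_1)(y)$, i.e.\ $(f,g/\lambda_1)$ solves \eqref{Van1}. Theorem \ref{thmV1}, together with $f\neq 0$ and $g/\lambda_1\neq 0$, gives $f=\lambda\chi$ and $g/\lambda_1=\frac{1}{2}(\chi-\chi^*)$ for a multiplicative $\chi$ with $\chi\neq\chi^*$. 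Then $h(x)=\lambda\chi(x)\int_S\chi\,d\mu$, and comparing with $h=\lambda_1 f=\lambda_1\lambda\chi$ gives $\lambda_1=\int_S\chi\,d\mu$, while $\int_S g\,d\mu=0$ gives $\int_S\chi\,d\mu=\int_S\chi^*\,d\mu$; so once more $(f,g)$ is of the form (2), now with $\alpha_2=0$.

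Conversely, a direct substitution (as in the converse part of Theorem \ref{thm}) verifies that every pair in (1) and (2) solves \eqref{Van2}, and abelianity of $f,g$ in (2) is immediate since $\chi$ and $\chi^*$ are multiplicative. I expect the one genuinely delicate point to be the bookkeeping in the case $\int_S g\,d\mu\neq 0$: there one must use the three relations simultaneously — $F$ determined by $h$, $g$ determined by the ``subtracted'' identity, and the $\mu$-integrals of $\chi$ and $\chi^*$ — and split according to which of $\alpha,\beta$ vanishes, in order to extract exactly the constraints recorded in (2).
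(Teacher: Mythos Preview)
Your proof follows essentially the same architecture as the paper's: the identical case split on $\int_S g\,d\mu$, the same appeal to Lemma~\ref{LVan2}, to Theorem~\ref{thmV1} in the degenerate branch, and to Theorem~\ref{thm1} in the non-degenerate one. Your elimination of the $\phi$-branch via the computation $F(xy)-F(\sigma(y)x)=2\chi(x)\phi(y)$ and your explicit formula $f=\int_S F(\cdot\,s)\,d\mu(s)-F\int_S G_1\,d\mu$ are a bit more hands-on than the paper's ``proceeding similarly to the proof of Theorem~\ref{thm} case~(2)(i)'', but the substance is the same.

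One slip to fix in the branch $\int_S g\,d\mu=0$: you correctly obtain $\lambda_1=\int_S\chi\,d\mu$ from $h=\lambda_1 f$, but you then also record that $\int_S g\,d\mu=0$ forces $\int_S\chi\,d\mu=\int_S\chi^*\,d\mu$, and immediately afterwards declare the pair to be of form~(2), which however carries the condition $\int_S\chi\,d\mu=-\int_S\chi^*\,d\mu\neq 0$. These two equalities are incompatible (they would give $\lambda_1=0$). The paper avoids this by never writing $\int_S\chi\,d\mu=\int_S\chi^*\,d\mu$ down; it substitutes directly into \eqref{Van2}, extracts $\lambda_1=\int_S\chi\,d\mu$, and stops. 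In fact, when $\alpha_2=0$ the constraint $\int_S\chi\,d\mu=-\int_S\chi^*\,d\mu$ is not needed for $(f,g)$ to solve \eqref{Van2}, so the cleanest repair is simply to delete that sentence and, like the paper, conclude from $\lambda_1=\int_S\chi\,d\mu\neq 0$ alone.
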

\begin{proof}
Let $f,g:S\rightarrow\mathbb{C}$ be a solution of \eqref{Van2}. If $f=0$, then we can see easily from \eqref{Van2} that $g\neq 0$ is arbirary. This occurs in (1). Now, we assume that $f\neq 0$ and $g\neq 0$. We  discuss two cases : 
$$\displaystyle \int_{S} g(s)d\mu(s)=0\ \ \text{and}\ \  \displaystyle \int_{S} g(s)d\mu(s)\neq 0.$$\\
\underline{Case 1:} $\displaystyle \int_{S} g(s)d\mu(s)=0$. According to Lemma \ref{LVan2} (2), there exists a constant $\lambda_1\in \mathbb{C}\backslash \lbrace 0\rbrace$ such that $\displaystyle \int_{S} f(xk) d\mu(k)=\lambda_1 f(x)$ for all $x\in S$. Thus, Eq. \eqref{Van2} can be written as 
\[f(xy)-f(\sigma(y)x)=2f(x)\dfrac{g(y)}{\lambda_1},\quad x,y\in S.\]
Then, according to Theorem \ref{thmV1} and taking into account that $f\neq 0$, we have one possibility
\[f=\lambda \chi\ \ \text{and}\ \ \dfrac{g}{\lambda_1}=\dfrac{\chi-\chi^*}{2},\]
where $\chi:S\rightarrow\mathbb{C}$ is a multiplicative function and $\lambda\in \mathbb{C}\backslash \lbrace 0\rbrace$ is a constant such that $\chi\neq \chi^*$. Inserting the forms of $f$ and $g$ into Eq. \eqref{Van2}, we obtain after some simplifications that $\lambda_1 =\displaystyle \int_{S} \chi(s)d\mu(s)$. This is part (2) with $\alpha:=\lambda$.\\
\underline{Case 2:} $\displaystyle \int_{S} g(s)d\mu(s)\neq 0$. By Lemma \ref{LVan2} (1), we have that 
$$F(xy)+F(\sigma(y)x)=2F(x)G_1(y),\quad x,y\in S,$$
where
$$F(x):=\dfrac{h(x)}{\displaystyle \int_{S} g(s)d\mu(s)}\quad\text{and}\quad G_1(x):=\dfrac{\displaystyle \int_{S} g(xt)d\mu(t)- \displaystyle \int_{S} g(x\sigma(t))d\mu(t)}{2\displaystyle \int_{S} g(s)d\mu(s)},$$
and $h(x):=\displaystyle \int_{S} f(xt)d\mu(t)$ for all $x\in S$. Thus, according to Theorem \ref{thm1} and taking into account that $F\neq 0$ and $G_1\neq 0$, we get 
$$F=\alpha \chi+\beta \chi^*\quad \text{and}\quad G_1=\dfrac{\chi+\chi^*}{2},$$
 where $\alpha,\beta\in\mathbb{C}$ are constants such that $(\alpha ,\beta)\neq (0,0)$ and $\chi:S\rightarrow\mathbb{C}$ is a non-zero multiplicative function. Therefore, proceeding similarly to the proof of Theorem \ref{thm} case (2)(i), we find that 
\[f=\alpha_1\chi+\alpha_2\chi^*\ \ \text{and}\ \ g=\displaystyle \int_{S} \chi(s)d\mu(s)\dfrac{\chi-\chi^*}{2},\]
where  $\alpha_1,\alpha_2\in \mathbb{C}$ are constants such that $(\alpha_1,\alpha_2)\neq (0,0)$, $\chi\neq \chi^*$ and $\displaystyle \int_{S} \chi(s)d\mu(s)=-\displaystyle \int_{S} \chi^*(s)d\mu(s)\neq 0$. This occurs in case (2).\\
The second possibility according to Theorem \ref{thm1} is that $F=\gamma(c\chi+\phi)$ and $G_1=\chi$, where $\gamma\in \mathbb{C}\backslash \lbrace 0\rbrace$ is a constant. This leads to the contradiction $g=0$.\par
The converses are straightforward verifications. This completes the proof.
\end{proof}
The final result of the paper is the following corollary about the solutions of the functional equation \eqref{Van3}, namely 
$$f(xyz_0)-f(\sigma(y)xz_0)=2f(x)g(y),\quad x,y\in S.$$
\begin{cor}
The solutions $f,g:S\rightarrow\mathbb{C}$ of Eq. \eqref{Van3} with $g\neq 0$ can be listed as follows
\begin{enumerate}
\item[(1)] $f=0$ and $g\neq 0$ arbitrary.
\item[(2)] $f=\alpha_1 \chi+\alpha_2\chi^*$ and $g=\chi(z_0)\dfrac{\chi-\chi^*}{2}$, where $\chi:S\rightarrow\mathbb{C}$ is a multiplicative function and $\alpha_1,\alpha_2\in \mathbb{C}$ are constants such that $(\alpha_1,\alpha_2)\neq (0,0)$, $\chi\neq \chi^*$ and $\chi(z_0)=-\chi^*(z_0)\neq 0$.\par 
Note that in case (2), $f$ and $g$ are abelian.
\end{enumerate}
\end{cor}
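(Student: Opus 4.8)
The plan is to derive this corollary directly from Theorem \ref{thmVan2} by specialising the measure $\mu$. Since $z_0\in Z(S)$ is fixed, the Dirac measure $\mu:=\delta_{z_0}$ is a (one-term) linear combination of Dirac measures concentrated at central points, hence an admissible measure in the sense of this paper. For this choice every integral against $\mu$ collapses to evaluation at $z_0$: for any $\psi:S\to\mathbb{C}$ and any $x\in S$ we have $\int_S \psi(xt)\,d\mu(t)=\psi(xz_0)$, and in particular $\int_S \psi(s)\,d\mu(s)=\psi(z_0)$. Thus Eq. \eqref{Van2} with $\mu=\delta_{z_0}$ is literally Eq. \eqref{Van3}, so $(f,g)$ solves \eqref{Van3} if and only if it solves \eqref{Van2} for this $\mu$.

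Next I would simply read off Theorem \ref{thmVan2} for $\mu=\delta_{z_0}$. Its family (1), $f=0$ with $g\neq 0$ arbitrary, is family (1) here. Its family (2), $f=\alpha_1\chi+\alpha_2\chi^*$ and $g=\int_S\chi(s)\,d\mu(s)\,\tfrac{\chi-\chi^*}{2}$ with $(\alpha_1,\alpha_2)\neq(0,0)$, $\chi\neq\chi^*$ and $\int_S\chi(s)\,d\mu(s)=-\int_S\chi^*(s)\,d\mu(s)\neq 0$, becomes, upon replacing $\int_S\chi(s)\,d\mu(s)$ by $\chi(z_0)$ and $\int_S\chi^*(s)\,d\mu(s)$ by $\chi^*(z_0)$, exactly $f=\alpha_1\chi+\alpha_2\chi^*$, $g=\chi(z_0)\,\tfrac{\chi-\chi^*}{2}$, with $\chi(z_0)=-\chi^*(z_0)\neq 0$; this is family (2) here. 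The assertion that $f$ and $g$ are abelian in case (2) is inherited verbatim from Theorem \ref{thmVan2}.

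For the converse one checks by elementary computation that the pairs in (1) and (2) satisfy \eqref{Van3}, using that $\chi,\chi^*$ are multiplicative and $z_0$ is central, so $\chi(xyz_0)=\chi(x)\chi(y)\chi(z_0)$ and likewise for $\chi^*$; this is the same verification already required for Theorem \ref{thmVan2}, now read with $\mu=\delta_{z_0}$. There is essentially no obstacle: the mathematical content lies entirely in Theorem \ref{thmVan2}, and the only point requiring care is bookkeeping, namely carrying the normalising constant $\chi(z_0)$ correctly through both solution families and confirming that the non-degeneracy conditions translate without loss. Accordingly the proof can be given in one line: it follows from Theorem \ref{thmVan2} by taking $\mu=\delta_{z_0}$ with $z_0\in Z(S)$ fixed.
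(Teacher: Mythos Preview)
Your proposal is correct and matches the paper's own proof, which is the one-liner ``The result follows from Theorem \ref{thmVan2} by taking $\mu=\delta_{z_0}$.'' Your additional paragraphs simply spell out the bookkeeping (that integrals against $\delta_{z_0}$ become evaluations at $z_0$ and that the side conditions translate verbatim), which is exactly the intended content of that sentence.
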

\begin{proof}
The result follows from Theorem \ref{thmVan2} by taking $\mu=\delta_{z_0}$.
\end{proof}
\subsection*{Acknowledgement} Our sincere regards and gratitude go to Professor Henrik Stetk\ae r for many valuable comments on our papers. We would also like to express our thanks to the
referees for useful comments.

% ------------------------------------------------------------------------

\begin{thebibliography}{1}
\bibitem{Acz1} J. Aczél, {\it Lectures on Functional Equations and their Applications}, Academic Press (New York, 1966).
\bibitem{Ajb} O. Ajebbar and E. Elqorachi, {\it Variants of Wilson's functional equation on semigroups},
Commun. Korean Math., {\bf 35} (2020), 711--722.
\bibitem{Ase1} Y. Aserrar, A. Chahbi and E. Elqorachi, {\it A variant of Wilson's functional equation on semigroups}, Commun. Korean Math., {\bf 38} (2023), 1063--1074.
\bibitem{Ase2} Y. Aserrar and E. Elqorachi, {\it Cosine and sine addition and subtraction law with an automorphism} Annal. Math Silesianae (2023). https://doi.org/10.2478/amsil-2023-0021
\bibitem{BE} B. Bouikhalene and E. Elqorachi, {\it An extension of Van Vleck’s functional equation for the
sine}, Acta Math. Hungar., {\bf 150} (2016), no. 1, 258–267.
\bibitem{Dav} T.M.K. Davison,  {\it D’Alembert’s functional equation on topological monoids}, Publ. Math. Debrecen., {\bf 75} (2009)(1/2),
41–66.
\bibitem{EL} E. Elqorachi, {\it Integral Van Vleck’s and Kannappan’s functional equations on semigroups}, Aequationes Math., {\bf 91} (2017), 83–98.
\bibitem{EA}  E. Elqorachi and A. Redouani, {\it Solutions and stability of a variant of Wilson’s functional equation},
 Proyecciones(Antofagasta, On line)., vol. {\bf 37} (2018), no. 2, {\bf pp}. 317-344.
\bibitem{F1} B. Fadli, D. Zeglami and S. Kabbaj, {\it A variant of Wilson’s functional equation}, Publ Math. Debrecen., {\bf 87} (2015), no. 3-4, 415–427.
\bibitem{Kan1} Pl. Kannappan, {\it A functional equation for the cosine}, Can. Math. Bull., {\bf 2} (1968), 495-498. https://doi.org/10.4153/CMB-1968-059-8
\bibitem{Pirk} A.M. Perkins and P.K. Sahoo, {\it On two functional equations with involution on groups related to sine and
cosine functions}, Aequationes Math., {\bf 89} (2015)(5), 1251–1263.
\bibitem{ST7} H. Stetk\ae r, {\it Functional equations on abelian groups with involution}, Aequationes. Math., {\bf 54} (1997), 144–172. 
\bibitem{ST4} H. Stetk\ae r, {\it On multiplicative maps}, Semigroup Forum., {\bf 63} (2001), no. 3, 466–468.
\bibitem{ST1} H. Stetk\ae r, {\it Functional equations on groups},
World scientific Publishing CO, (Singapore, 2013).
\bibitem{ST5} H. Stetk\ae r, {\it A variant of d'Alembert's functional equation}, Aequationes Math., no 3,  {\bf 89} (2015), 657--662.
\bibitem{ST2} H. Stetk\ae r, {\it Van Vleck’s functional equation for the sine}, Aequationes Math., {\bf 90} (2016), 25–34.
\bibitem{ST3} H. Stetk\ae r, {\it Kannappan’s functional equation on semigroups with involution}, Semigroup Forum., {\bf 94} (2017), 17–30.
\bibitem{ST6} H. Stetk\ae r, {\it  The small dimension lemma and d’Alembert’s equation on semigroups}, Aequationes Math., {\bf 95} (2021), 281–299.
\bibitem{Van} E.B.Van. Vleck, {\it A functional equation for the sine}, Ann. Math. Second Ser., {\bf 11} (1910)(4), 161–165.
\bibitem{W1} W. H. Wilson, {\it On certain related functional equations},
Bull. Amer. Math. Soc., {\bf 26} (1919-20), no 7.,  33--312.

\end{thebibliography}
\end{document}